\theoremstyle{plain}
\newtheorem{theorem}{Theorem}[section]
\newtheorem{lemma}{Lemma}[section]
\newtheorem{definition}{Definition}[section]
\newtheorem{remark}{Remark}[section]
\renewcommand{\eqref}[1]{\textnormal{(\ref{#1})}}
\numberwithin{equation}{section}
\begin{document}
\title[Boundary localization of transmission eigenfunctions]{Boundary localization of transmission eigenfunctions in spherically stratified media}

\author{Yan Jiang}
\address{Department of Mathematics, Jilin University, Changchun, Jilin, China.}
\email{jiangyan20@mails.jlu.edu.cn}

\author{Hongyu Liu}
\address{Department of Mathematics, City University of Hong Kong, Hong Kong SAR, China.}
\email{hongyu.liuip@gmail.com, hongyliu@cityu.edu.hk}

\author{Jiachuan Zhang}
\address{School of Physical and Mathematical Sciences, Nanjing Tech University, Nanjing, Jiangsu, China.}
\email{zhangjc@njtech.edu.cn}

\author{Kai Zhang}
\address{Department of Mathematics, Jilin University, Changchun, Jilin, China.}
\email{zhangkaimath@jlu.edu.cn}


\begin{abstract}

Consider the transmission eigenvalue problem for $u \in H^1(\Omega)$ and $v\in H^1(\Omega)$:
\[
\left\{
\begin{array}{ll}
\nabla \cdot (\sigma \nabla u)+k^2 \mathbf{n}^2u =0  & \text{in} \ \ \Omega, \medskip \\
\Delta v + k^2 v =0  & \text{in} \ \ \Omega, \medskip \\
\displaystyle u=v, \  \sigma\frac{\partial u}{\partial \nu}=\frac{\partial v}{\partial \nu} & \text{on} \ \ \partial\Omega, \medskip \\
\end{array}
\right.
\]
where $\Omega$ is a ball in $\mathbb{R}^N$, $N=2,3$. If $\sigma$ and $\mathbf{n}$ are both radially symmetric, namely they are functions of the radial parameter $r$ only, we show that there exists a sequence of transmission eigenfunctions $\{u_m, v_m\}_{m\in\mathbb{N}}$ associated with $k_m\rightarrow+\infty$ as $m\rightarrow+\infty$ such that the $L^2$-energies of $v_m$'s are concentrated around $\partial\Omega$. If $\sigma$ and $\mathbf{n}$ are both constant, we show the existence of transmission eigenfunctions $\{u_j, v_j\}_{j\in\mathbb{N}}$ such that both $u_j$ and $v_j$ are localized around $\partial\Omega$. Our results extend the recent studies in \cite{CDHLW,CDLS}. Through numerics, we also discuss the effects of the medium parameters, namely $\sigma$ and $\mathbf{n}$, on the geometric patterns of the transmission eigenfunctions.

\medskip

\noindent{\bf Keywords:}~~ Transmission eigenfunctions, spectral geometry, boundary localization, wave localization 

\noindent{\bf 2010 Mathematics Subject Classification:}~~35P25, 78A46 (primary); 35Q60, 78A05 (secondary).

\end{abstract}

\maketitle

\section{Introduction}

In this paper, we study the geometric patterns of transmission eigenfunctions. To begin with, we briefly discuss the physical origin of the transmission eigenvalue problem.

Let us consider the time-harmonic wave scattering caused by the interaction of an incident wave field and an inhomogeneous medium. Let $u^i$ denote the incident field, which is an entire solution to $(\Delta+k^2)u^i=0$ in $\mathbb{R}^N$, $N=2,3$. Here, $k\in\mathbb{R}_+$ signifies the (normalized) angular frequency of the wave propagation. Let $(\Omega;\sigma, \mathbf{n})$ signify the inhomogeneous medium. Here, $\Omega$ denotes the support of the inhomogeneity of the medium, which is a bounded Lipschitz domain such that $\mathbb{R}^N\backslash\overline{\Omega}$. $\sigma$ and $\mathbf{n}$ signify the medium parameters. It is assumed that $\sigma$ and $\mathbf{n}$ are $L^\infty$ functions and both are bounded below by a positive constant. We also let $\sigma=\mathbf{n}\equiv 1$ in $\mathbb{R}^N\backslash\overline{\Omega}$. Let $u$ and $u^s:=u-u^i$, respectively, denote the total and scattered wave fields. The wave scattering is governed by the following Helmholtz system:
\begin{equation}\label{eq:Helm1}
\nabla\cdot(\sigma\nabla u)+k^2 \mathbf{n}^2 u=0\quad\mbox{in}\ \mathbb{R}^N;\ \ \lim_{r\rightarrow+\infty} r^{(N-1)/2}\left(\partial_r u^s-\mathrm{i}k u^s\right)=0,
\end{equation}
where $\mathrm{i}:=\sqrt{-1}$, $r:=|x|$ for $x\in\mathbb{R}^N$, and $\partial_r u:=\hat x\cdot\nabla_x u$ with $\hat x:=x/|x|\in\mathbb{S}^{N-1}$. The limit in \eqref{eq:Helm1} is called the radiation condition and characterizes the outgoing nature of the scattered wave field $u^s$. In the physical setup, when $N=2$, \eqref{eq:Helm1} describes the transverse electromagnetic scattering, where $\sigma^{-1}$ and $\mathbf{n}^2$ specify the electric permittivity and magnetic permeability of the optical medium \cite{LL}; whereas when $N=3$, \eqref{eq:Helm1} describes the acoustic scattering, where $\sigma^{-1}$ and $\mathbf{n}^2$ are respectively the density and modulus of the acoustic medium \cite{LSSZ}. We refer to \cite{LSSZ, Mclean2000} for the well-posedness of the scattering problem \eqref{eq:Helm1} with a unique solution $u\in H_{loc}^1(\mathbb{R}^N)$. It holds that:
\begin{equation}\label{eq:farfield}
u(x)=u^i(x)+\frac{e^{\mathrm{i}kr}}{r^{(N-1)/2}} u_\infty(\hat x)+\mathcal{O}\left(\frac{1}{r^{(N+1)/2}}\right)\quad\mbox{as}\ \ r\rightarrow+\infty. 
\end{equation}
In \eqref{eq:farfield}, $u_\infty(\hat x)$ is known as the far-field pattern, which encodes the scattering information caused by the perturbation of the incident field $u^i$ due to the scatterer $(\Omega;\sigma, \mathbf{n})$.

Associated with the scattering problem describe above, a practical inverse scattering problem of industrial importance is to recover $(\Omega;\sigma, \mathbf{n})$ by knowledge of $u_\infty(\hat x)$. The inverse problem can be abstractly recast as the following operator equation: 
\begin{equation}\label{eq:ip1}
\mathcal{F}(\Omega; \sigma, \mathbf{n})=u_\infty(\hat x),
\end{equation}
where $\mathcal{F}$ is defined via the scattering problem \eqref{eq:Helm1}. For \eqref{eq:ip1}, one peculiar case is that $u_\infty\equiv 0$. In such a case, the scatterer $(\Omega;\sigma,\mathbf{n})$ produces no scattering information to the outside observation, namely it is invisible/transparent with respect to the wave probing. Noting that $u^i\equiv 0$ readily yields $u^i=u^s$ in $\mathbb{R}^N\backslash\overline{\Omega}$ by Rellich's Theorem \cite{CK}, one can directly derive that $u|_{\Omega}\in H^1(\Omega)$ and $v:=u^i|_{\Omega}\in H^1(\Omega)$ fulfil that:
\begin{equation}\label{eq:trans0}
\left\{
\begin{array}{ll}
\nabla \cdot (\sigma \nabla u)+k^2 \mathbf{n}^2 u =0  & \text{in} \  \ \Omega, \medskip \\
\Delta v + k^2 v =0  & \text{in} \ \ \Omega, \medskip \\
\displaystyle u=v, \  \sigma\frac{\partial u}{\partial \nu}=\frac{\partial v}{\partial \nu} & \text{on} \ \ \partial\Omega,\medskip
\end{array}
\right.
\end{equation}
where $\nu\in\mathbb{S}^{N-1}$ is the exterior unit normal vector to $\partial\Omega$. \eqref{eq:trans0} is referred to as the transmission eigenvalue problem. It is clear that $u=v\equiv 0$ are trivial solutions to \eqref{eq:trans0}. If there exists a nontrivial pair of solutions $(u, v)$, $k$ is called a the transmission eigenvalue and $u, v$ are the associated transmission eigenfunctions. Clearly, according to our discussion above, the transmission eigenfunctions depict the wave propagation inside the scatterer when invisibility/transparency occurs.

The spectral theory of transmission eigenvalue problem has received considerable interest in the literature. We refer to \cite{CCH,CKreview,Liureview} for survey and review on the spectral properties of transmission eigenvalues. Recently, several intrinsic local and global geometric patterns of the transmission eigenfunctions have been revealed. Roughly and heuristically speaking, the transmission eigenfunctions tend to (globally) localize/concentrate on $\partial\Omega$ while (locally) vanish around singular/high-curvature points on $\partial\Omega$. Here, by localization/concentration, we mean that the $L^2$-energies of the eigenfunctions in $\Omega$ are localized/concentrated around $\partial\Omega$; and by a singular point, we mean the boundary point on $\partial\Omega$ at which the normal vector $\nu$ is no longer differentiable. A singular point can be regarded as having (extrinsic) curvature being infinity. The local geometric property was first discovered and investigated in \cite{BL1,BL2}, and was further studied in \cite{B,BLLW,BLin,BLX2020,DDL,DCL,DLS,DLWY} for different geometric and physical setups. We also refer to \cite{B,BLin,BL3,BL4,BLX2020,BPS,CV,CX,CDL,DCL,DLS,LX,SPV,SS} for related studies in characterizing non-scattering waves (locally) around corner/singular points. The global geometric property was first discovered and investigated in \cite{CDHLW}, and was further studied in \cite{CDLS,DJLZ21,DLWW} for different geometric and physical setups. Those geometric patterns are physically interpretable. In fact, in order to achieve invisibility/transparency, the wave propagates in a ``smart" way which slides over the boundary surface of the scattering object while avoids the singular/highly-curved places to avoid being trapped. More intriguingly, the geometric properties have been used to produce super-resolution imaging schemes for inverse acoustic and electromagnetic scattering problems \cite{CDHLW,HLW}, artificial mirage \cite{DLWW}, and pseudo surface plasmon resonance \cite{DLWW}. 

In this paper, we further study the boundary concentration of the transmission eigenfunctions and extend the related studies in \cite{CDHLW,CDLS} to more general setups. Specifically, in \cite{CDHLW,CDLS}, the theoretical justifications are mainly concerned with the case that $\sigma\equiv 1$, $\mathbf{n}$ is constant and $\Omega$ is smooth and convex, though the general case with variable medium parameter $\mathbf{n}$ and non-convex and non-smooth $\Omega$ is numerically investigated. We shall include both (being possibly variable) $\sigma$ and $\mathbf{n}$ into the current study and rigorously justify the boundary concentration phenomenon for the transmission eigenfunctions. Moreover, we shall present some novel numerical observations which strengthen the medium effect on the geometric patterns of the transmission eigenfunctions. More detailed discussion about the main results shall be given in Section 2, and the corresponding proofs are provided in Sections 3 and 4. 

\section{Statement of the main results and discussion}

Following \cite{CDHLW}, we first provide a quantitative description of surface/boundary localization of a function $\varphi\in L^2(\Omega)$. In what follows, for $\varepsilon\in\mathbb{R}_+$, we define
\begin{equation}\label{eq:dd1}
\mathcal{N}_\varepsilon(\partial\Omega):=\{x\in\Omega; \ \mathrm{dist}(x, \partial\Omega)<\varepsilon\},
\end{equation}
where $\mathrm{dist}$ signifies the Euclidean distance in $\mathbb{R}^N$, $N=2,3$. Clearly, $\mathcal{N}_\varepsilon(\partial\Omega)$ defines an $\varepsilon$-neighbourhood of $\partial\Omega$.

\begin{definition}\label{def:1}
A function $\varphi\in L^2(\Omega)$ is said to be \emph{boundary-localized} (or, \emph{surface-localized}) if there exists $\varepsilon\ll 1$ such that
\begin{equation}\label{eq:localized1}
\frac{\|\varphi\|_{L^2(\Omega\backslash\mathcal{N}_\varepsilon(\partial\Omega))}}{\|\varphi\|_{L^2(\Omega)}}\ll 1.
\end{equation}
\end{definition}
According to Definition~\ref{def:1}, the $L^2(\Omega)$-energy of the function $\varphi$ is mainly localized in a small neighbourhood of $\partial\Omega$, namely $\mathcal{N}_\varepsilon(\partial\Omega)$. In what follows, the asymptotic parameters involved in Definition~\ref{def:1} shall become more rigorous. In the case that $\Omega$ is a ball in $\mathbb{R}^N$, $N=2,3$, by scaling and translation if necessary, we can assume without loss of generality that $\Omega$ is the unit ball, namely $\Omega:=\{x\in\mathbb{R}^N; |x|<1\}$. In such a case, we also set $\Omega_\tau:=\{x\in\mathbb{R}^N; |x|<\tau\},\ \tau\in (0, 1)$  to signify the ball of radius $\tau$. 

Our main results can be stated as follows.
\begin{theorem}\label{thm:main1}
Consider the transmission eigenvalue problem \eqref{eq:trans1}. Let $\Omega$ be the unit ball in $\mathbb{R}^N$, $N=2,3$. Assume that $\mathbf{n}$, $\sigma$ are functions of the radial parameter $r$ only which fulfil Assumption A in what follows. Then for any given $\tau\in (0, 1)$, there exists a sequence of eigenfunctions $\{u_m, v_m\}_{m\in\mathbb{N}}$ associated to eigenvalues $k_m\rightarrow\infty$ as $m\rightarrow\infty$ such that:
\begin{equation}\label{eq:result1}
\lim_{m\rightarrow\infty} \frac{\|v_m\|_{L^2(\Omega_\tau)}}{\|v_m\|_{L^2(\Omega)}}=0.
\end{equation}
\end{theorem}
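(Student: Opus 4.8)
The plan is to use the radial symmetry to separate variables and reduce the system to a countable family of coupled radial problems indexed by the angular degree $\ell\in\mathbb{N}$. Writing $u=p_\ell(r)Y_\ell$ and $v=g_\ell(r)Y_\ell$, with $Y_\ell$ a spherical harmonic ($N=3$) or an angular Fourier mode ($N=2$), the two equations become
\begin{equation*}
\frac{1}{r^{N-1}}\bigl(r^{N-1}\sigma p_\ell'\bigr)'+\Bigl(k^2\mathbf n^2-\frac{\sigma\,\ell(\ell+N-2)}{r^2}\Bigr)p_\ell=0,\qquad
\frac{1}{r^{N-1}}\bigl(r^{N-1}g_\ell'\bigr)'+\Bigl(k^2-\frac{\ell(\ell+N-2)}{r^2}\Bigr)g_\ell=0,
\end{equation*}
each supplemented with regularity at $r=0$, which selects the recessive solution; in particular $g_\ell$ is, up to normalization, a Bessel function $r^{-(N-2)/2}J_\nu(kr)$ with $\nu=\ell+(N-2)/2$. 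The transmission conditions at $r=1$ collapse to the scalar secular equation $D_\ell(k):=\sigma(1)\,p_\ell'(1)\,g_\ell(1)-p_\ell(1)\,g_\ell'(1)=0$. I would then seek, for each large $\ell$, a root $k_\ell$ of $D_\ell$ in a narrow window just above the centrifugal threshold, so that the turning point $\nu/k_\ell$ of the Helmholtz radial equation tends to $1$, i.e. $\ell/k_\ell\to 1$.

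Granting such a root, the localization of $v_m$ is the easy half. Since the angular factor cancels in the Rayleigh quotient,
\begin{equation*}
\frac{\|v_m\|_{L^2(\Omega_\tau)}^2}{\|v_m\|_{L^2(\Omega)}^2}=\frac{\int_0^\tau |g_{\ell_m}(r)|^2 r^{N-1}\,dr}{\int_0^1 |g_{\ell_m}(r)|^2 r^{N-1}\,dr}.
\end{equation*}
I would invoke Olver's uniform asymptotics for $J_\nu(\nu z)$. Once $\ell_m/k_m\to 1$, the fixed interval $(0,\tau)$ lies strictly below the turning point (as $\tau<1$ is fixed while $\nu/k\to1$), so the numerator is bounded by $e^{-c_\tau\nu}$ with $c_\tau>0$ depending only on $\tau$, whereas an Airy-scale estimate near the turning point bounds the denominator from below by a fixed negative power of $\nu$. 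The exponential beats the polynomial, forcing the quotient to $0$; this is precisely where $\ell/k\to1$ is needed so as to cover every $\tau\in(0,1)$.

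For the existence of $k_\ell$ I would run a sign-change/monotonicity analysis. Rewriting $D_\ell(k)=0$ as the matching of boundary log-derivatives $\beta^u_\ell(k)=\sigma(1)p_\ell'(1)/p_\ell(1)$ and $\beta^v_\ell(k)=g_\ell'(1)/g_\ell(1)$, Sturm–Liouville (Prüfer) theory shows each is strictly decreasing in $k$ between consecutive interior Dirichlet eigenvalues, across which it sweeps from $+\infty$ to $-\infty$. Tracking the signs of the pole-free quantity $D_\ell$ at the first Dirichlet eigenvalues of the two radial problems — which, by a turning-point/Langer analysis and under Assumption~A, both occur at $k$ with $\ell/k\to1$ — yields a sign change and hence a root $k_\ell$ in the desired window. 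The uniform Liouville–Green (Langer) asymptotics of the \emph{variable-coefficient} solution $p_\ell$ near its turning point, required to evaluate $p_\ell(1),p_\ell'(1)$ and to pin $k_\ell$ into the correct window, is the step I expect to be the main obstacle: the Helmholtz factor $g_\ell$ is explicit, but $p_\ell$ must be controlled through $\sigma$ and $\mathbf n$, and Assumption~A is what guarantees the turning points are positioned so that the trapping window is nonempty.

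Finally I would fix any sequence $\ell_m\to\infty$, set $k_m:=k_{\ell_m}$, verify $k_m\to\infty$ (since $k_m\asymp\ell_m$) and $\ell_m/k_m\to1$, and assemble $u_m=p_{\ell_m}Y_{\ell_m}$, $v_m=g_{\ell_m}Y_{\ell_m}$ to obtain \eqref{eq:result1}.
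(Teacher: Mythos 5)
Your overall architecture coincides with the paper's: separation of variables, the scalar secular function $D_\ell(k)$ (the paper's $f_m$), a sign-change argument for $D_\ell$ between consecutive $k$-zeros of the radial $u$-solution at $r=1$, and exponential smallness of the Bessel ratio $J_\nu(k\tau)/J_\nu(k)$ once $k$ sits below the turning-point scale. The genuine gap is precisely the step you flag as ``the main obstacle'' and then leave unexecuted: locating, in $k$, the zeros of the variable-coefficient quantity $p_\ell(1;k)$ inside a window that simultaneously (i) keeps $J_\nu$ single-signed there, so that $D_\ell(z)D_\ell(z')$ reduces to the sign of $p_\ell'(1;z)\,p_\ell'(1;z')<0$ for consecutive zeros $z,z'$, and (ii) forces the resulting eigenvalue to satisfy $k_{l_m}\le c\,m$ with $c<1$. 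Without this, neither the existence of $k_\ell$ nor the hypothesis that $k\tau$ lies below the turning point is established. The paper closes this gap with no uniform asymptotics at all: the substitution $\psi_m=(r\sigma)^{1/2}\phi_m$ puts the $u$-equation in Liouville normal form $\psi_m''+Q_m\psi_m=0$; conditions (A2)--(A5) give the pointwise comparison $Q_m\ge P_m:=M_1k^2-(m^2-\tfrac14)r^{-2}$, whose solution is the explicit $J_m(k\sqrt{M_1}\,r)$; and the singular Sturm comparison theorem of \cite{DU2010} places a zero of $\psi_m(\cdot\,;k)$ between consecutive zeros of $J_m(k\sqrt{M_1}\,\cdot)$. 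A continuity-in-$k$ argument then produces zeros $z_{m,s_0}$ of $\phi_m(1;\cdot)$ in $\left(j_{m,s_0}/\sqrt{M_1},\,j_{m,s_0+1}/\sqrt{M_1}\right)$, which for large $m$ lies below $j_{m,1}'$. This is where all of Assumption A is consumed, and it is considerably lighter than the Langer/turning-point analysis you propose, which would require uniform-in-$\ell$ error control for a variable-coefficient problem.

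Two further corrections to your plan. First, the window you aim for is not the one Assumption A produces: the constructed eigenvalues satisfy $1/\sqrt{M_1}\le k_{l_m}/m\le (1+\sqrt{M_1})/(2\sqrt{M_1})$, so $\ell/k$ stays bounded below by $2\sqrt{M_1}/(1+\sqrt{M_1})>1$ rather than tending to $1$; hunting for roots with $\ell/k\to1$ is hunting in the wrong place relative to the hypotheses. Second, $\ell/k\to1$ is not ``needed to cover every $\tau\in(0,1)$'': what is needed is only $\limsup_m k_m/\ell_m\le 1$, and the strict bound $k_m/\ell_m\le c<1$ coming from $M_1>1$ makes the denominator estimate elementary (monotonicity of $J_\nu$ on $[0,j_{\nu,1}']$) instead of requiring an Airy-scale lower bound near the turning point.
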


According to Theorem~\ref{thm:main1}, if one takes $\tau$ to be sufficiently close to 1, namely $\varepsilon=1-\tau\ll 1$, it is clear that  $v_m$ in \eqref{eq:result1} with $m$ sufficiently large are all boundary-localized according to Definition~\ref{def:1}. The Assumption A in Theorem~\ref{thm:main1} is stated as follows.

\medskip
{\bf{Assumption A.}} Let $\mathbf{n}_{1}$, $\mathbf{n}_{2}$,
$M_1$, $M_2$ be positive constants, and the radial functions
$\mathbf{n}(r)\in C[0, 1]$, $\sigma(r)\in C^2[0, 1]$ satisfy the following properties:
\begin{equation}\label{eq:assump1}
\begin{array}{ll}
(A1)~1<\mathbf{n}_{1}<\mathbf{n}(r)<\mathbf{n}_{2}, \quad &(A2)~1<M_1<M_2,   \\
(A3)~\sigma'(r)\leq0, \quad \quad \quad\quad   &(A4)~M_1\leq
\frac{\mathbf{n}^2}{\sigma(r)}\leq
M_2,\\
(A5)\sigma'(r)^2\geq2\sigma''(r)\sigma(r).
\end{array}
\end{equation}
It is remarked that in our subsequent analysis, we actually can combine conditions (A4) and (A5) by requiring a slightly less restrictive condition:
\begin{equation}\label{eq:cc1}
M_1 k^2\leq \frac{\mathbf{n}^2}{\sigma} k^2+\frac{\sigma'^2}{4\sigma^2(r)}-\frac{\sigma''(r)}{2\sigma(r)}\leq M_2 k^2. 
\end{equation}
However, the condition \eqref{eq:cc1} involves the eigenvalue $k^2$, and we split it into the two conditions (A4) and (A5) in Assumption A. It is directly verified that when both $\sigma$ and $\mathbf{n}$ are constant, the assumptions in \eqref{eq:assump1} yield that
\begin{equation}\label{eq:assump2}
\mathbf{n}>1\quad\mbox{and}\quad 0<\sigma\leq \mathbf{n}^2.
\end{equation}
Nevertheless, if both $\sigma$ and $\mathbf{n}$ are constant, we can prove a stronger boundary-localization result.

\begin{theorem}\label{thm:main12}
Consider the same setup as Theorem~\ref{thm:main1} and assume that $\mathbf{n}$, $\sigma$ are both positive constants satisfying $\mathbf{n}^2>\sigma$. Then for any given $\tau\in (0, 1)$, there exists a sequence of eigenfunctions $\{u_m, v_m\}_{m\in\mathbb{N}}$ associated to eigenvalues $k_m\rightarrow\infty$ as $m\rightarrow\infty$ such that
\begin{equation}\label{eq:result1}
\lim_{m\rightarrow\infty} \frac{\|\varphi_m\|_{L^2(\Omega_\tau)}}{\|\varphi_m\|_{L^2(\Omega)}}=0,\quad\varphi_m=u_m\ \mbox{or}\ v_m. 
\end{equation}

\end{theorem}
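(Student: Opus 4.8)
The plan is to exploit the rotational symmetry to separate variables, reduce the transmission conditions to a scalar characteristic equation in Bessel functions, and then, for each large angular degree $\ell$, manufacture a transmission eigenvalue whose two eigenfunctions both live in the classically evanescent regime on $\Omega_\tau$. Since $\sigma,\mathbf{n}$ are constant, the first equation reads $\Delta u+\tilde k^2 u=0$ with $\tilde k:=k\mathbf{n}/\sqrt{\sigma}$, while $\Delta v+k^2v=0$. On the unit ball the separated solutions of degree $\ell$ that are regular at the origin are $u=a\,\mathcal{C}_\ell(\tilde k r)Y_\ell(\hat x)$ and $v=b\,\mathcal{C}_\ell(k r)Y_\ell(\hat x)$, where $\mathcal{C}_\ell$ denotes $J_\ell$ (when $N=2$) or the spherical Bessel function $j_\ell$ (when $N=3$), and $Y_\ell$ is a (spherical) harmonic of degree $\ell$. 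Imposing $u=v$ and $\sigma\partial_\nu u=\partial_\nu v$ at $r=1$ gives a homogeneous $2\times 2$ system in $(a,b)$, and a nontrivial mode of degree $\ell$ exists if and only if
\[
d_\ell(k):=\sigma\,\tilde k\,\mathcal{C}_\ell'(\tilde k)\,\mathcal{C}_\ell(k)-k\,\mathcal{C}_\ell'(k)\,\mathcal{C}_\ell(\tilde k)=0.
\]

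Next I would locate eigenvalues in the right spectral window. Since $\mathbf{n}^2>\sigma$, the ratio $c:=\mathbf{n}/\sqrt{\sigma}$ satisfies $c>1$. Let $z_{\ell,1}<z_{\ell,2}$ be the two smallest positive zeros of $\mathcal{C}_\ell$; the classical asymptotics give $z_{\ell,s}=\ell+O(\ell^{1/3})$, so $z_{\ell,2}<c\,z_{\ell,1}$ for all large $\ell$. Evaluating $d_\ell$ at $\tilde k=z_{\ell,s}$ (equivalently $k=z_{\ell,s}/c$) kills the second term, leaving $\sgn d_\ell=\sgn\bigl(\mathcal{C}_\ell'(z_{\ell,s})\bigr)\,\sgn\bigl(\mathcal{C}_\ell(z_{\ell,s}/c)\bigr)$. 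Because $z_{\ell,s}/c<z_{\ell,1}$ we have $\mathcal{C}_\ell(z_{\ell,s}/c)>0$, while $\mathcal{C}_\ell'(z_{\ell,1})<0$ and $\mathcal{C}_\ell'(z_{\ell,2})>0$. Hence $d_\ell$ changes sign on $[z_{\ell,1}/c,\,z_{\ell,2}/c]$, and by the intermediate value theorem there is a transmission eigenvalue $k_\ell$ with $\tilde k_\ell\in(z_{\ell,1},z_{\ell,2})$, so $\tilde k_\ell=\ell+O(\ell^{1/3})$ and $k_\ell=\tilde k_\ell/c\to\infty$. Relabelling the sequence by $m$ gives eigenvalues $k_m\to\infty$, and since $\mathcal{C}_\ell(\tilde k_\ell)\neq0$ and $\mathcal{C}_\ell(k_\ell)>0$ the recovered $(a,b)$ are both nonzero, so $u_m$ and $v_m$ are genuinely nontrivial.

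The localization then follows from the turning-point structure. The turning point of $u$ is $\rho_u:=\ell/\tilde k_\ell\to 1^-$, while that of $v$ is $\ell/k_\ell=c\,\rho_u\to c>1$; in particular $k_\ell<\ell$ for large $\ell$, so $\mathcal{C}_\ell(k_\ell r)$ is evanescent and increasing throughout $[0,1]$, and $\rho_u>\tau$ once $\ell$ is large. On $\Omega_\tau$ both arguments $\tilde k_\ell r$ and $k_\ell r$ stay below $\ell$ by a fixed factor, where the uniform (Debye/Airy) asymptotics give $\mathcal{C}_\ell(z)\lesssim e^{-c_0\ell}$ for some $c_0>0$; meanwhile the full norms capture the $O(1)$-amplitude oscillatory part on $[\rho_u,1]$ for $u$ and the monotone growth up to $r=1$ for $v$, which are only polynomially small in $\ell$. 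Incorporating the radial weight $r^{N-1}$ and dividing yields
\[
\frac{\|\varphi_m\|_{L^2(\Omega_\tau)}}{\|\varphi_m\|_{L^2(\Omega)}}\longrightarrow 0,\qquad \varphi_m=u_m\ \text{or}\ v_m,
\]
which is the claim.

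The existence step is a clean sign-change argument, so I expect the main obstacle to be the quantitative localization: one must make rigorous that the Bessel functions are exponentially small below the turning point while the denominators are not. This needs uniform-in-order asymptotics of $\mathcal{C}_\ell$ through the transition region $z\approx\ell$, rather than the crude $z\to\infty$ expansions, together with a genuine lower bound on the mass accumulated near $r=1$. It is worth emphasizing where the hypothesis $\mathbf{n}^2>\sigma$ is used: it forces $c>1$, which is exactly what pushes $u$'s turning point up towards $1$ while simultaneously keeping $v$ evanescent, so that \emph{both} eigenfunctions localize at once — the strengthening over Theorem~\ref{thm:main1}.
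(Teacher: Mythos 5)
Your proposal follows essentially the same route as the paper: separation of variables reduces the transmission conditions to the determinant $f_m(k)=J_m(k\mathbf{n}/\sqrt{\sigma})\,J_m'(k)-\sqrt{\sigma}\,\mathbf{n}\,J_m'(k\mathbf{n}/\sqrt{\sigma})\,J_m(k)$ (and its spherical analogue), the eigenvalues $k_m$ with $k_m\mathbf{n}/\sqrt{\sigma}\in(j_{m,s_0},j_{m,s_0+1})$ are produced by exactly the same sign-change of this determinant at the scaled Bessel zeros, and the localization is obtained from the exponential decay of $J_m$ (resp.\ $J_{m+1/2}$) below the turning point against the full $L^2$-norm, which is also how the paper closes the argument via the uniform estimates borrowed from \cite{DJLZ21}. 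One small imprecision: since $k_m\approx m\sqrt{\sigma}/\mathbf{n}<m$, the full norm $\|v_m\|_{L^2(\Omega)}$ is itself exponentially small in $m$ (not merely polynomially small as you assert), but this is harmless because what both you and the paper actually use is the ratio $J_m(k_m\tau)/J_m(k_m)\le(1-\delta)^{m}$.
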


That is, if $\sigma$ and $\mathbf{n}$ are both constant, there exists a sequence of transmission eigenfunctions where both the $u$-parts and $v$-parts are localized around $\partial \Omega$. 

\begin{remark}\label{rem:n1}
In Theorem~\ref{thm:main12}, the condition $\mathbf{n}^2>\sigma$ is required. We would like to point that the condition can also be replaced to be $\mathbf{n}^2<\sigma$. In fact, in the latter case, for $(u, v)$ to \eqref{eq:trans0}, we can set 
\begin{equation}\label{eq:rep1}
\tilde k=k\cdot\frac{\mathbf{n}}{\sigma},\ \ \tilde{\mathbf{n}}=\frac{1}{\mathbf{n}},\ \ \tilde{\sigma}=\frac{1}{\sigma},\ \ \tilde{u}=v,\ \ \tilde{v}=u. 
\end{equation}
It is directly verified that 
\begin{equation}\label{eq:trans0n1}
\left\{
\begin{array}{ll}
\nabla \cdot (\tilde\sigma \nabla \tilde u)+\tilde{k}^2 \tilde{\mathbf{n}}^2 \tilde{u} =0  & \text{in} \ \  \Omega, \medskip \\
\Delta \tilde v + \tilde k^2 \tilde v =0  & \text{in} \ \ \Omega, \medskip \\
\displaystyle \tilde u=\tilde v, \  \tilde \sigma\frac{\partial \tilde u}{\partial \nu}=\frac{\partial \tilde v}{\partial \nu} & \text{on} \ \ \partial\Omega. \medskip \\
\end{array}
\right.
\end{equation}
Since $\mathbf{n}^2<\sigma$, one clearly has $\tilde{\mathbf{n}}^2>\tilde\sigma$. Hence, Theorem~\ref{thm:main12} applied to \eqref{eq:trans0n1} readily yields the existence of a sequence of boundary-localized transmission eigenfunctions $\{\tilde{u}_m, \tilde{v}_m\}_{m\in\mathbb{N}}$ associated with $\tilde{k}_m\rightarrow\infty$ as $m\rightarrow\infty$. Then by using the relations in \eqref{eq:rep1}, we have the existence of a sequence of boundary-localized transmission eigenfunctions $\{{u}_m, {v}_m\}_{m\in\mathbb{N}}$ associated with ${k}_m\rightarrow\infty$ as $m\rightarrow\infty$. Finally, we would like to point out that if one takes $\sigma\equiv 1$, the result in Theorem~\ref{thm:main12} recovers those in \cite{CDHLW, DJLZ21}. 
\end{remark}

So far, we have mainly considered the radially symmetric cases. In particular, in Theorem~\ref{thm:main1}, we can only show the boundary-localization of the $v_m$-part, though we believe that there exist infinitely many transmission eigenfunctions such that both $u$- and $v$-parts are boundary-localized. In \cite{CDHLW}, extensive numerical examples show that the surface/boundary-localization is a generic phenomenon occurring for transmission eigenfunctions, even associated with variable medium parameters and general domains. In particular, we note that for the case that $\Omega$ is divided into two connected subdomains, $\Omega=\Omega^{(1)}\cup\Omega^{(2)}$ with $\Omega^{(1)}\Subset\Omega$ and $\Omega^{(2)}=\Omega\backslash\overline{\Omega^{(1)}}$, if the medium parameters $(\sigma_1, \mathbf{n}_1)$ in $\Omega^{(1)}$ and $(\sigma_2, \mathbf{n}_2)$ in $\Omega^{(2)}$ are both constant, then there exist boundary-localized transmission eigenmodes generically even when $(\sigma_1, \mathbf{n}_1)\neq (\sigma_2, \mathbf{n}_2)$ (which corresponds to variable medium parameters). It is remarked that in the numerical examples in \cite{CDHLW}, it is always assumed that $\sigma_1=\sigma_2\equiv 1$. Nevertheless, we would like to confirm that the same numerical conclusion holds   
when $\sigma_1$ and $\sigma_2$ are other constants; see Fig.~\ref{fig:1} for typical illustration and comparison. It can be seen that the boundary-localization phenomenon is still every evident though less sharper than the case with $\sigma\equiv 1$. 
\begin{figure}[htp]
  \centering
  \includegraphics[width=0.7\textwidth]{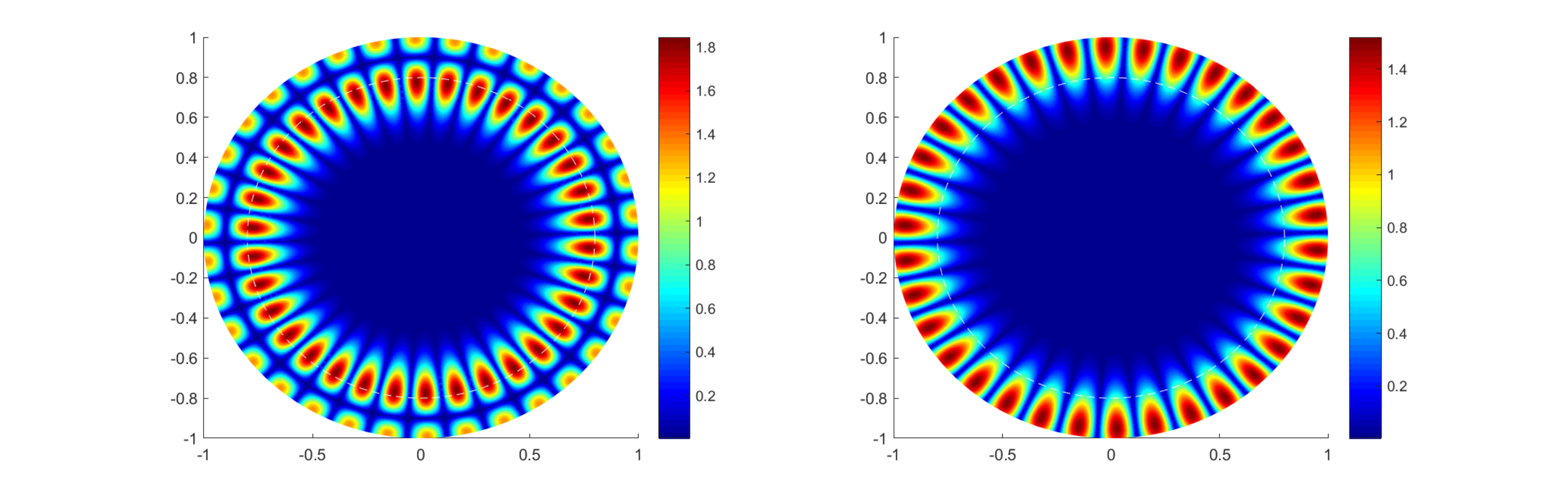} \includegraphics[width=0.7\textwidth]{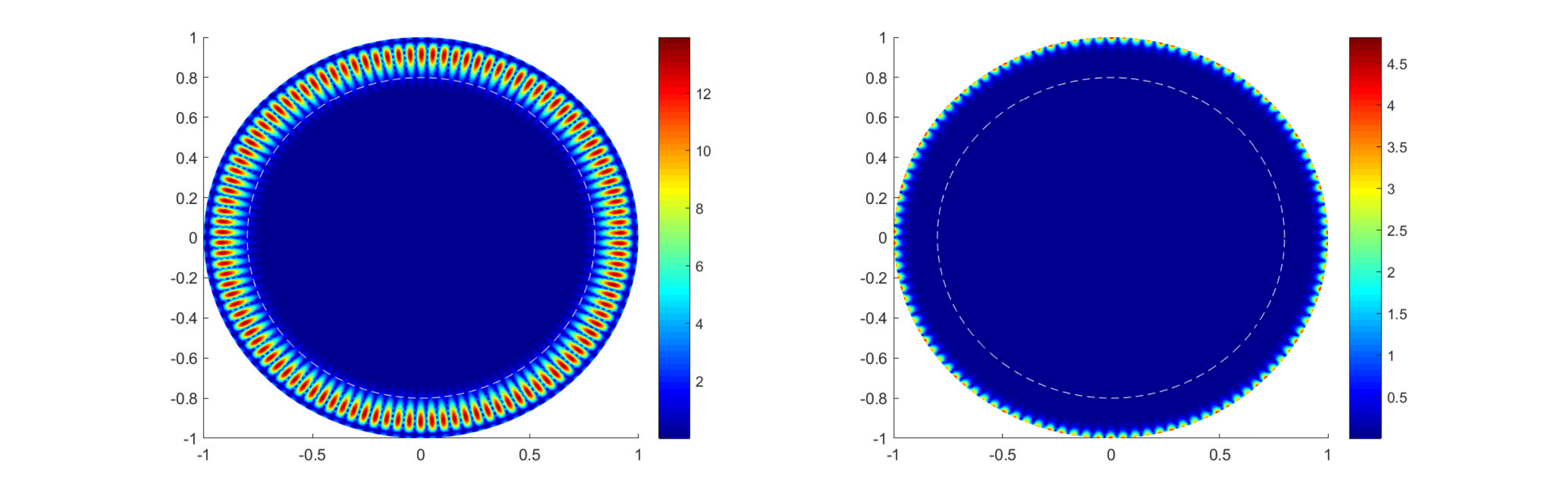}
  \caption{First row: $|u|$ (left) and $|v|$ (right) with $k^2=402.989$ and $(\sigma, \mathbf{n}^2)=(1, 1)\chi_{\Omega_{0.8}}+(2, 3)\chi_{\Omega_1\backslash\Omega_{0.8}}$. Second row: $|u|$ (left) and $|v|$ (right) with $k^2=402.665$ and $(\sigma, \mathbf{n}^2)=(1, 0.1)\chi_{\Omega_{0.8}}+(1, 10)\chi_{\Omega_1\backslash\Omega_{0.8}}$. \label{fig:1} }
\end{figure}
Next, we present two more numerical examples which were not considered in \cite{CDHLW}, and show that variable medium parameters can make the geometric patterns of the transmission eigenfunctions more intriguing; see Fig.~\ref{fig:2}, where $\Omega^{(1)}$ and $\Omega^{(2)}$ respectively signify the left-subdomain and right-subdomain of the domain $\Omega$. 
\begin{figure}[htp]
  \centering
  \includegraphics[width=0.7\textwidth]{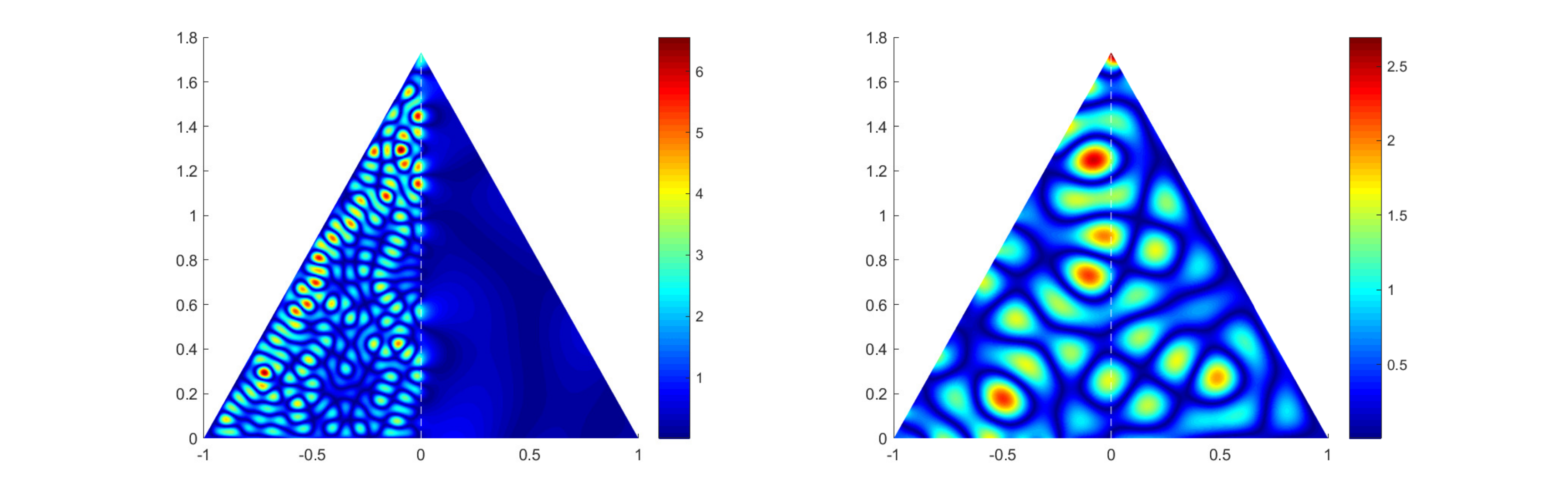}   \includegraphics[width=0.7\textwidth]{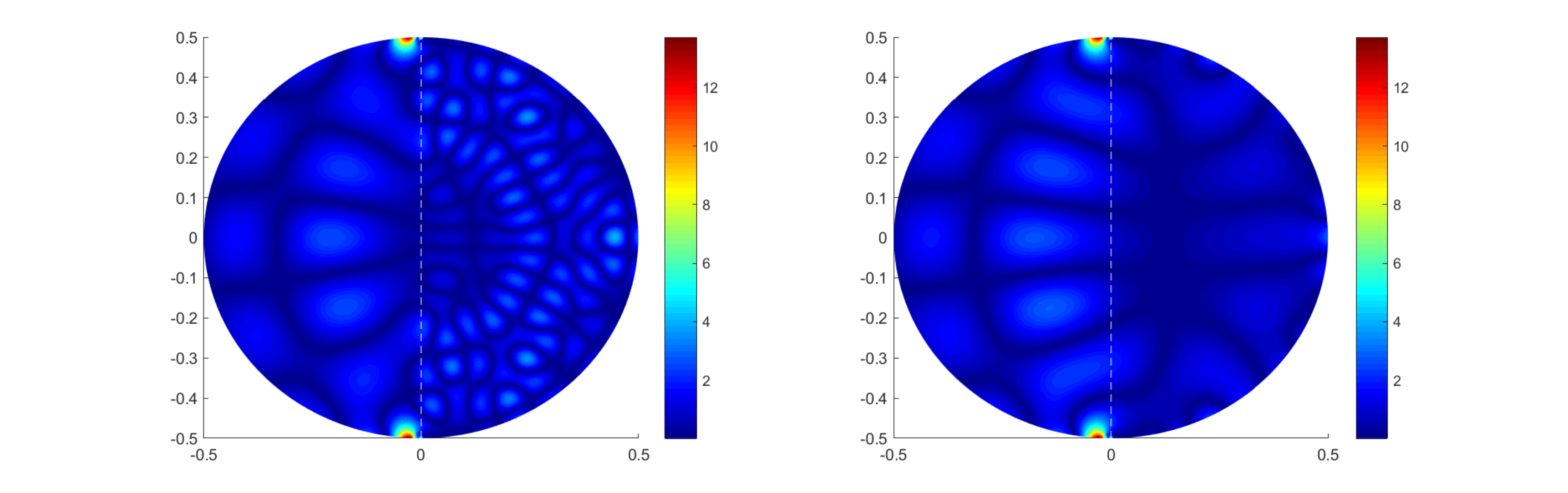}
  \caption{First row: $|u|$ (left) and $|v|$ (right) with $k^2=395.307$ and $(\sigma, \mathbf{n}^2)=(0.1, 1)\chi_{\Omega^{(1)}}+(10, 1)\chi_{\Omega^{(2)}}$. Second row: $|u|$ (left) and $|v|$ (right) with $k^2=410.024$ and $(\sigma, \mathbf{n}^2)=(1, 1.1)\chi_{\Omega^{(1)}}+(1, 10)\chi_{\Omega^{(2)}}$. \label{fig:2}}
\end{figure}
It can be observed that in addition to the boundary localization, it may happen that the transmission eigenfunctions are localized around the material interface or even at two ``exceptional" points. The numerical observations in Fig.~\ref{fig:2} partly corroborates the necessity of introducing Assumption A in our current study. On the other hand, they are highly interesting spectral phenomena that are worth further investigation in our forthcoming work.

\section{Proof of Theorem~\ref{thm:main1}}

%

In this section, we present the proof of Theorem~\ref{thm:main1}. That is, the transmission eigenvalue problem is given by:
\begin{equation}\label{eq:trans1}
\left\{
\begin{array}{ll}
\nabla \cdot (\sigma(r) \nabla u)+k^2 \mathbf{n}^2(r)u =0  & \text{in} \ \ \Omega, \medskip \\
\Delta v + k^2 v =0  & \text{in} \ \  \Omega, \medskip \\
\displaystyle u=v, \  \sigma(1)\frac{\partial u}{\partial r}=\frac{\partial v}{\partial r} & \text{on} \ \ \partial\Omega. \medskip \\
\end{array}
\right.
\end{equation}
Throughout the rest of this section, we assume that $\Omega$ is the unit ball in $\mathbb{R}^N$, $N=2, 3$. We shall divide our analysis into two parts, respectively, for the two and three dimensions.

\subsection{Two-dimensional case}

In two dimensions, we let $x=(r\cos\theta, r\sin\theta)\in\mathbb{R}^2$, $r=|x|$, denote the polar coordinate. In the sequel, $J_m$, $m\in\mathbb{N}\cup\{0\}$, signifies the $m$-th order Bessel function \cite{CK,LZ}. 

First, we know that the solutions to \eqref{eq:trans1} have the following Fourier expansions \cite{CK,LZ}:
\begin{equation}\label{eq:series1}
u(x)=\sum_{m=0}^\infty \alpha_m \phi_m(r;k) e^{\mathrm{i}m\theta}, \quad v(x)=\sum_{m=0}^\infty \beta_m J_m(kr) e^{\mathrm{i}m\theta},
\end{equation}
where $\alpha_m$, $\beta_m\in\mathbb{C}$. Set
\begin{equation}\label{eq:series2}
u_m(x)=\alpha_m \phi_m(r;k) e^{\mathrm{i}m\theta}, \quad v_m(x)=\beta_m J_m(kr) e^{\mathrm{i}m\theta},
\end{equation}
then we have
\begin{equation}
\sigma\phi_m^{\prime\prime}+\left(\sigma^{\prime}+\frac{\sigma}{r}\right)\phi_m^{\prime}+\left(
k^2\mathbf{n}^2-\frac{m^2}{r^2}\sigma\right)\phi_m=0,
\end{equation}
where the differentiations are with respect to the variable $r$. Furthermore, we assume that
\begin{equation*}
\psi_m(r;k)=(r\sigma(r))^{\frac{1}{2}}\phi_m(r;k),
\end{equation*}
and then
\begin{equation*}
\psi_m^{''}+\left\{\frac{k^2\mathbf{n}^2}{\sigma}-\frac{m^2}{r^2}+\frac{\sigma^2+r^2\sigma^{\prime 2}+2r\sigma\sigma^{\prime}}{4r^2\sigma^2}-\frac{2\sigma^{\prime}+r\sigma^{\prime\prime}}{2r\sigma}\right\}\psi_m=0,
\end{equation*}
which implies
\begin{equation}\label{eq:def fm}
\psi_m^{''}+\left\{\left(\frac{k^2\mathbf{n}^2}{\sigma}+\frac{\sigma^{\prime2}}{4\sigma^2}-\frac{\sigma^{\prime\prime}}{2\sigma}\right)
-\frac{\sigma^{\prime}}{2\sigma r}-\left(m^2-\frac{1}{4}\right)\frac{1}{r^2}\right\}\psi_m=0.
\end{equation}

For the subsequent use, we let $j_{m,s}$ and $j'_{m,s}$ denote the $s$-th positive root of
$J_m(t)$ and $J'_m(t)$, respectively, that are arranged according to the magnitudes \cite{LZ}.



\begin{lemma}\label{lem:2}
Under Assumption A, for any $m \geq 1$ the function $\psi_m(r;k)$ in \eqref{eq:def fm} possesses at least one zero point in $\left(\frac{j_{m,s_0}}{k\sqrt{M_1}}, \frac{j_{m, s_0+1}}{k\sqrt{M_1}}\right)$ for any $s_0 \in \mathbb{N}$.
\end{lemma}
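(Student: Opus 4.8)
The plan is to compare the oscillation of $\psi_m$ with that of the Bessel function $J_m$ via the Sturm comparison theorem. The key observation is that $\psi_m$ solves the second-order ODE \eqref{eq:def fm}, which has the Sturm--Liouville form $\psi_m''+Q(r)\psi_m=0$ with potential
\begin{equation}\label{eq:plan-Q}
Q(r)=\left(\frac{k^2\mathbf{n}^2}{\sigma}+\frac{\sigma'^2}{4\sigma^2}-\frac{\sigma''}{2\sigma}\right)-\frac{\sigma'}{2\sigma r}-\left(m^2-\frac14\right)\frac{1}{r^2}.
\end{equation}
By contrast, writing the Bessel equation for $J_m(kr)$ in the same reduced form, the function $\chi(r):=r^{1/2}J_m(kr)$ satisfies $\chi''+P(r)\chi=0$ with $P(r)=k^2-(m^2-\tfrac14)r^{-2}$. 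After the scaling $r\mapsto r\sqrt{M_1}$, the zeros of $\chi$ in the stated interval correspond exactly to consecutive zeros $j_{m,s_0},j_{m,s_0+1}$ of $J_m$. So the statement reduces to showing $\psi_m$ has a zero between two consecutive zeros of the comparison solution.

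First I would verify the pointwise potential inequality $Q(r)\geq M_1 k^2-(m^2-\tfrac14)r^{-2}$ on $(0,1)$, which is the comparison solution's potential (before rescaling). This is where Assumption A enters decisively: condition (A5), $\sigma'^2\geq 2\sigma''\sigma$, guarantees $\frac{\sigma'^2}{4\sigma^2}-\frac{\sigma''}{2\sigma}\geq 0$, so this curvature term only helps; condition (A4), $\mathbf{n}^2/\sigma\geq M_1$, gives $\frac{k^2\mathbf{n}^2}{\sigma}\geq M_1 k^2$; and condition (A3), $\sigma'\leq 0$, makes the first-order remnant term $-\frac{\sigma'}{2\sigma r}\geq 0$ since $\sigma>0$. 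Collecting these three sign facts yields $Q(r)\geq M_1 k^2-(m^2-\tfrac14)r^{-2}$ precisely, which is the content encoded in the slightly weaker combined condition \eqref{eq:cc1}. Then I would invoke the Sturm comparison theorem: since the potential governing $\psi_m$ dominates the potential of the comparison equation whose solution $\chi$ has consecutive zeros at the rescaled points $j_{m,s_0}/(k\sqrt{M_1})$ and $j_{m,s_0+1}/(k\sqrt{M_1})$, the solution $\psi_m$ must vanish somewhere strictly between them.

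The main obstacle I anticipate is twofold. First, one must confirm the comparison interval is genuinely inside $(0,1)$ so that \eqref{eq:plan-Q} is valid there and the boundary behaviour of $\psi_m$ near $r=0$ does not interfere; this amounts to choosing $k$ large enough (equivalently $s_0$ in a suitable range) that $j_{m,s_0+1}/(k\sqrt{M_1})<1$, which is compatible with the eventual goal of letting $k\to\infty$. Second, the first-order term $-\frac{\sigma'}{2\sigma r}$ blows up near $r=0$, so some care is needed to ensure the comparison is applied on an interval bounded away from the origin; restricting attention to $m\geq 1$ (as in the hypothesis) keeps the centrifugal term $(m^2-\tfrac14)r^{-2}$ dominant and the reduced function $\psi_m$ well behaved. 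Once these are handled, the Sturm comparison step itself is routine, and the conclusion follows.
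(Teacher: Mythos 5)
Your proposal follows essentially the same route as the paper: reduce the $\phi_m$ equation to the Liouville normal form $\psi_m''+Q\psi_m=0$, check via (A3)--(A5) that $Q(r)\geq M_1k^2-(m^2-\tfrac14)r^{-2}$ (the potential of the reduced Bessel equation whose solution $\sqrt{r}\,J_m(k\sqrt{M_1}\,r)$ vanishes at $j_{m,s}/(k\sqrt{M_1})$), and conclude by Sturm comparison; the paper handles the singular endpoint at $r=0$ that you flag by invoking the singular Sturm comparison theorem of Aharonov--Elias \cite{DU2010}. Your verification of the potential inequality is in fact spelled out more explicitly than in the paper, which merely asserts it follows from (A2)--(A5).
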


\begin{proof}
The condition (A1)
guarantees the non-degeneracy of $u$ in \eqref{eq:trans1}. Let
\begin{equation}
\left\{
\begin{array}{lll}
P_{m}(r;k)&=&M_1k^2-(m^2-\frac{1}{4})\frac{1}{r^2},\\
Q_m(r;k)&=&\left(\frac{k^2\mathbf{n}^2}{\sigma}+\frac{\sigma^{\prime2}}{4\sigma^2}-\frac{\sigma^{\prime\prime}}{2\sigma}\right)
-\frac{\sigma^{\prime}}{2\sigma r}+\left(m^2-\frac{1}{4}\right)\frac{1}{r^2},
\end{array}
\right.
\end{equation}
and it follows from (A2)-(A5) that
\begin{equation}\label{eq:Pm Qm}
P_{m}(r;k)\leq Q_m(r;k).
\end{equation}
Consider the following differential equations
\begin{eqnarray}
(\mathbf{P}1)~~ \qquad \xi_m^{\prime\prime}+P_{m}(r;k)\xi_m=0,   \\
(\mathbf{P}2) \qquad \psi_m^{\prime\prime}+Q_m(r;k)\psi_m=0. 
\end{eqnarray}
Then for each fixed $m\in\mathbb{N}$ , the solutions of the $(\mathbf{P}1)$ are denoted by
$J_m(k\sqrt{M_1}r)$, whose roots are given by $\{\frac{j_{m,s_0}}{k\sqrt{M_1}}\}_{s_0 \in \mathbb{N}}$.
It follows from the Singular Strum theorem in \cite{DU2010} and \eqref{eq:Pm Qm} that the solution of
$(\mathbf{P}2)$ has at least one zero in
$\left(\frac{j_{m,s_0}}{k\sqrt{M_1}},\frac{j_{m,s_0+1}}{k\sqrt{M_1}}\right)$ for any $s_0 \in \mathbb{N}$, denoted by $t_{m,s_{0}}(k)$; see the dashed line in Fig~\ref{fig:tmso}.
\end{proof}

\begin{figure}[htp]
\centering
\includegraphics{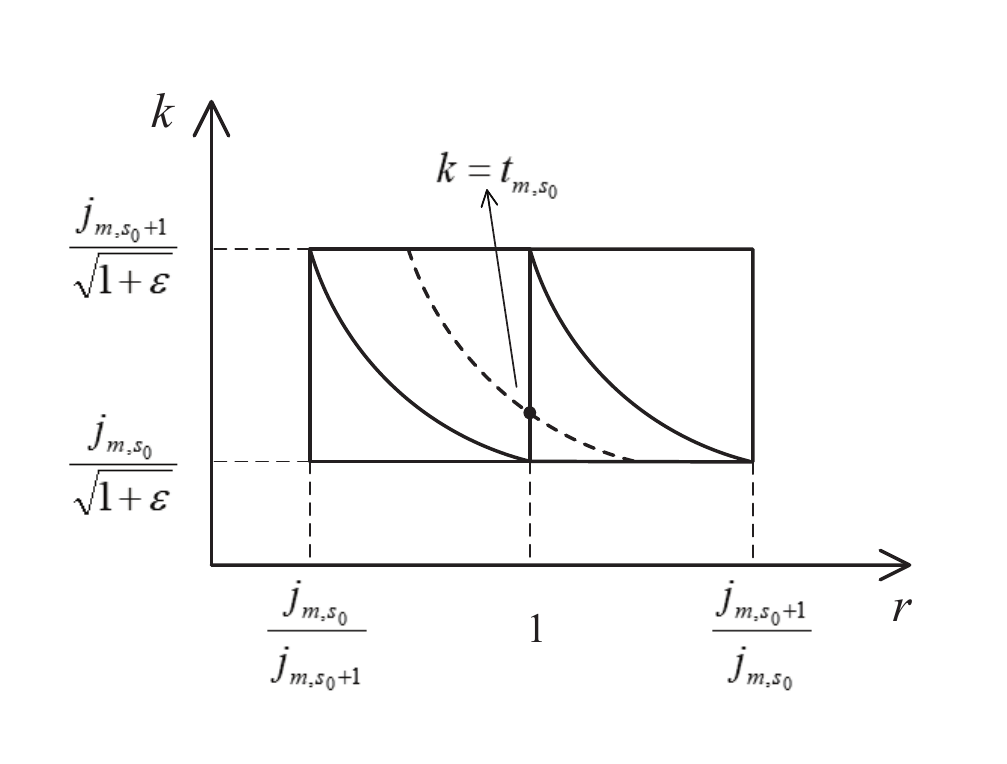}
\caption{Schematic demonstration of the curve $t_{m,s_{0}}(k)$ and the point $z_{m,s_0}$.}\label{fig:tmso}
\end{figure}

Since the solutions of $(\mathbf{P}1)$ and $(\mathbf{P}2)$ are continuous with respect to the parameter
$k$, there exists a number $z_{m,s_0}\in
\left(\frac{j_{m,s_0}}{\sqrt{M_1}},\frac{j_{m,s_0+1}}{\sqrt{M_1}}\right)$
such that
\begin{equation}
\psi_m(1;z_{m,s_0})=0,
\end{equation}
which is the intersection of the dashed line $t_{m,s_{0}}(k)$ and $r=1$ in Fig~\ref{fig:tmso}.

Let $\beta_m=1$ in \eqref{eq:series2}. Using the transmission condition, it holds that
\begin{equation*}
\alpha_m=\frac{J_{m}(k)}{\phi_{m}(1;k)}.
\end{equation*}
Set $f_m(k)=\sigma(1)\frac{\partial \phi_m(1;k)}{\partial r}J_m(k)-kJ^{\prime}_m(k)\phi_m(1;k)$. Using the recursive formula of Bessel functions \cite{CK}, we have
\begin{equation}\label{eq:fmzs0fmzs1}
f_m(k)=\sigma(1)
\frac{\partial \phi_m(1;k)}{\partial r}J_m(k)+(mJ_{m}(k)-kJ_{m-1}(k))\phi_m(1;k),\quad m\geq 1.
\end{equation}

Next, we find the roots of $f_m(k)$ on the interval $\left(\frac{j_{m,s_0}}{\sqrt{M_1}},\frac{j_{m,s_0+2}}{\sqrt{M_1}}\right)$.
\begin{lemma}\label{lem:3.2}
Under Assumption A, for any given $s_0 \in \mathbb{N}$, there exists $m_0(\mathbf{n},\sigma)\in \mathbb{N}$, depending on $\mathbf{n}$ and $\sigma$, such that when $m>m_0(\mathbf{n},\sigma)$, the function $f_m(k)$ in \eqref{eq:fmzs0fmzs1} possesses at least one root in $\left(\frac{j_{m,s_0}}{\sqrt{M_1}}, \frac{j_{m, s_0+2}}{\sqrt{M_1}}\right)$.
\end{lemma}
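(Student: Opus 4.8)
The plan is to detect a transmission eigenvalue as a sign change of $f_m$ rather than by solving $f_m(k)=0$ directly. The starting point is that $f_m(k)=0$ is precisely the solvability (transmission) condition for the $m$-th mode: eliminating $\alpha_m,\beta_m$ from the two boundary relations in \eqref{eq:trans1} gives exactly the expression \eqref{eq:fmzs0fmzs1}. I would therefore evaluate $f_m$ at the two special frequencies $z_{m,s_0}$ and $z_{m,s_0+1}$ constructed above, which lie in the consecutive subintervals $(j_{m,s_0}/\sqrt{M_1},j_{m,s_0+1}/\sqrt{M_1})$ and $(j_{m,s_0+1}/\sqrt{M_1},j_{m,s_0+2}/\sqrt{M_1})$, both contained in the target interval. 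At each such frequency one has $\phi_m(1;z)=0$ (equivalently $\psi_m(1;z)=0$, since $(r\sigma)^{1/2}>0$ at $r=1$), so the second term in \eqref{eq:fmzs0fmzs1} drops and
\[
f_m(z)=\sigma(1)\,\partial_r\phi_m(1;z)\,J_m(z).
\]
Moreover $\phi_m(1;z)=0$ forces $\partial_r\phi_m(1;z)\neq 0$ by uniqueness for the second-order ODE, so $f_m(z)\neq 0$ and the only remaining task is to compare the signs of the two factors at $z_{m,s_0}$ and $z_{m,s_0+1}$.

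For the Bessel factor I would show that, for $m$ large, both frequencies fall below the first positive zero $j_{m,1}$ of $J_m$, so that $J_m(z_{m,s_0})$ and $J_m(z_{m,s_0+1})$ are both positive. This is exactly where the hypothesis $M_1>1$ and the restriction $m>m_0(\mathbf{n},\sigma)$ enter: using the asymptotics $j_{m,s}=m+c_s m^{1/3}+o(m^{1/3})$ for fixed $s$, the upper endpoint obeys $j_{m,s_0+2}/\sqrt{M_1}<j_{m,1}$ as soon as the $O(m)$ gap $(\sqrt{M_1}-1)\,m$ dominates the $O(m^{1/3})$ corrections, which occurs for all $m$ beyond a threshold $m_0$ depending only on $M_1$ (hence on $\mathbf{n},\sigma$) and on $s_0$. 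I expect this Bessel-zero comparison to be the main obstacle, since it is the one place where the medium constants and the largeness of $m$ are genuinely used and it requires quantitative control of $j_{m,1}$ against $j_{m,s_0+2}$.

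For the derivative factor I would invoke Sturm--Liouville oscillation theory for the normal form \eqref{eq:def fm}. Regarding $k^2$ as the spectral parameter, the coefficient multiplying $\psi_m$ is increasing in $k^2$ through the term $k^2\mathbf{n}^2/\sigma$, so the frequencies at which $\psi_m(1;\cdot)$ vanishes constitute a genuine increasing sequence of Dirichlet eigenvalues on $(0,1)$, regular at $r=0$ because $\psi_m\sim r^{\,m+1/2}$ for $m\geq 1$; the eigenfunction attached to the $s$-th eigenvalue then has exactly $s-1$ interior zeros. Consequently, at two consecutive such eigenvalues the interior zero count increments by one, and since $\psi_m$ starts positive near $r=0$ and vanishes at $r=1$, the sign of $\partial_r\psi_m(1)$, hence of $\partial_r\phi_m(1)$, flips. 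Among the (at least two) Dirichlet eigenvalues lying in $(j_{m,s_0}/\sqrt{M_1},j_{m,s_0+2}/\sqrt{M_1})$ I would select a consecutive pair; combining the sign flip of $\partial_r\phi_m(1)$ with the common positive sign of $J_m$ shows that $f_m$ takes opposite signs at the two members of this pair.

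Finally, $f_m$ is continuous in $k$, by the smooth dependence of $\phi_m(1;k)$ and $\partial_r\phi_m(1;k)$ on the parameter $k$ together with the smoothness of $J_m$ and $J_m'$. The intermediate value theorem then yields a zero of $f_m$ strictly between the chosen consecutive eigenvalues, which lies in $(j_{m,s_0}/\sqrt{M_1},j_{m,s_0+2}/\sqrt{M_1})$. This produces the desired transmission eigenvalue and completes the proof.
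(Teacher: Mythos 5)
Your proposal is correct and follows essentially the same route as the paper: evaluate $f_m$ at the two consecutive zeros $z_{m,s_0},z_{m,s_0+1}$ of $\phi_m(1;\cdot)$ lying in the target interval, observe that there the boundary expression reduces to $\sigma(1)\,\partial_r\phi_m(1;\cdot)\,J_m(\cdot)$, that the Bessel factor has a fixed sign at both points (since for $m$ large both lie below the first critical point/zero of $J_m$, by the Bessel-zero asymptotics, which is where $M_1>1$ and $m>m_0$ enter) while $\partial_r\phi_m(1;\cdot)$ flips sign, and conclude by the intermediate value theorem. The only substantive difference is that you supply the Sturm--Liouville oscillation argument justifying the sign flip of $\partial_r\phi_m(1;\cdot)$ at consecutive roots, a step the paper merely asserts.
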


\begin{proof}
In the following, we let $a_s$ denote the $s$-th negative zero of the Airy function \cite{WQ99}:
\begin{equation*}\label{eq:dd1}
a_{s}=-\left[\frac{3 \pi}{8}(4 s-1)\right]^{2 /
3}(1+\sigma_{s}),
\end{equation*}
where $\sigma_s$ satisfies the following estimate:
\begin{equation*}
0 \leq \sigma_{s} \leq 0.130\left[\frac{3 \pi}{8}(4
s-1.051)\right]^{-2}.
\end{equation*}
By (1.2) in \cite{WQ99}, it holds that
\begin{equation}\label{eq:jms bdd}
m-\frac{a_{s}}{2^{1 / 3}} m^{1 / 3}<j_{m, s}<m-\frac{a_{s}}{2^{1 /
3}} m^{1 / 3}+\frac{3}{20} a_{s}^{2} \frac{2^{1 / 3}}{m^{1 / 3}},
\end{equation}
where we recall that $j_{m,s}$ denotes the $s$-th positive root of $J_m(t)$. For each fixed $s_0 \in \mathbb{N}$, when $m(s_0)$ is large enough, we have
\begin{equation*}
\frac{j_{m,s_0}}{\sqrt{M_1}}<j_{m,1}^{\prime}.
\end{equation*}
Consider the interval $\left(\frac{j_{m,s_0}}{\sqrt{M_1}},\frac{j_{m,s_0+2}}{\sqrt{M_1}}\right)$, there exist at least two consecutive zeros $z_{m,s_{0}}$ and $z_{m,s_{0}+1}$ of
$\psi_m(1;k)$, as well as $\phi_m(1;k)$. This result together with the monotonicity of $J_{m}$ in the interval $[0,j_{m,1}^{'}]$ readily yields that
\begin{equation}\label{eq:Jzms0Jzms01}
\begin{array}{ll}
J_{m}(z_{m,s_{0}})J_{m}(z_{m,s_{0}+1})< J_m^2(j_{m,1}^{'}).
\end{array}
\end{equation}

By virtue of \eqref{eq:Jzms0Jzms01} and \eqref{eq:fmzs0fmzs1}, it can be directly verified that
\begin{equation}
\begin{split}
& f_m(z_{m,s_{0}})f_m(z_{m,s_{0}+1})\\
=& \sigma^2(1)J_{m}(z_{m,s_{0}})J_{m}(z_{m,s_{0}+1})
\frac{\partial \phi_m(1;z_{m,s_{0}})}{\partial r}\frac{\partial \phi_m(1;z_{m,s_{0}+1})}{\partial r}\\
<& \sigma^2(1)J_m^2(j_{m,1}^{\prime})\frac{\partial \phi_m(1;z_{m,s_{0}})}{\partial r}\frac{\partial \phi_m(1;z_{m,s_{0}+1})}{\partial r}.
\end{split}
\end{equation}
Since $z_{m,s_{0}}$, $z_{m,s_{0}+1}$ are two consecutive roots, $\frac{\partial \phi_m(1;z_{m,s_{0}})}{\partial r}$ and $\frac{\partial \phi_m(1;z_{m,s_{0}+1})}{\partial r}$ have opposite signs. Hence, we have
\begin{equation}\label{eq:rr1}
f_m(z_{m,s_{0}})f_m(z_{m,s_{0}+1})<0.
\end{equation}
By applying Rolle's theorem to \eqref{eq:rr1}, we immediately see that there exists at least one zero point of $f_m(k)$ in $(z_{m,s_{0}},z_{m,s_{0}+1})$.

The proof is complete.
\end{proof}

Lemma~\ref{lem:3.2} shows the existence of transmission eigenvalues to \eqref{eq:trans1} associated with spherically stratified media. It is noted that in \cite{LC}, transmission eigenvalues were calculated in a similar setup but assuming $\sigma\equiv 1$. In what follows, for a fixed $s_{0}$, we let the transmission eigenvalue be denoted by
\begin{equation}\label{eq:te1}
k_{l_{m}} :=k_{m,s_{0}}\in(z_{m,s_{0}},z_{m,s_{0}+1}),\quad m=m_0+1,m_0+2,... ,
\end{equation}
where $m_0 = m_0(\mathbf{n},\sigma)$ is sufficiently large.
\begin{theorem}\label{thm:surloc}
Consider the same setup as Theorem~\ref{thm:main1} in $\mathbb{R}^2$ and assume that Assumption A holds. Let $(u_m, v_m)$ be the pair of eigenfunctions associated with $k_{l_m}$ in \eqref{eq:te1}. We have
\begin{equation}\label{eq:ff1}
\lim\limits_{m\rightarrow\infty}\frac{\|v_m\|_{L^2(\Omega_{\tau})}}{\|v_m\|_{L^2(\Omega)}}=0.
\end{equation}
\end{theorem}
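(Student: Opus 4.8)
The plan is to exploit the explicit form $v_m(x)=J_m(k_{l_m}r)e^{\mathrm{i}m\theta}$ (recall $\beta_m=1$) together with the classical behaviour of Bessel functions of large order below their turning point. Passing to polar coordinates, the angular factor cancels in both the numerator and denominator of \eqref{eq:ff1}, so that
\begin{equation*}
\frac{\|v_m\|_{L^2(\Omega_\tau)}^2}{\|v_m\|_{L^2(\Omega)}^2}
=\frac{\int_0^\tau |J_m(k_{l_m}r)|^2\, r\,\mathrm{d}r}{\int_0^1 |J_m(k_{l_m}r)|^2\, r\,\mathrm{d}r},
\end{equation*}
and it suffices to prove this radial ratio tends to $0$. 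The first step is to pin down the location of $k_{l_m}$: from \eqref{eq:te1} and Lemma~\ref{lem:2} one has $k_{l_m}\in\left(j_{m,s_0}/\sqrt{M_1},\,j_{m,s_0+2}/\sqrt{M_1}\right)$, while the asymptotics \eqref{eq:jms bdd} give $j_{m,s}=m+O(m^{1/3})$ for each fixed $s$. Hence $k_{l_m}/m\to 1/\sqrt{M_1}$ as $m\to\infty$, and since $M_1>1$ by (A2), the turning point $m/k_{l_m}\to\sqrt{M_1}>1$ lies strictly outside the unit disk for all large $m$.

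The second step records the monotone growth of $v_m$. Because $k_{l_m}r<k_{l_m}<j_{m,1}'$ for every $r\in(0,1)$ once $m$ is large, the map $r\mapsto J_m(k_{l_m}r)$ is nonnegative and strictly increasing on $(0,1)$. Fixing any intermediate radius $\tau'\in(\tau,1)$, monotonicity yields the elementary bounds
\begin{equation*}
\int_0^\tau |J_m(k_{l_m}r)|^2\, r\,\mathrm{d}r\le \frac{\tau^2}{2}\,J_m(k_{l_m}\tau)^2,
\qquad
\int_0^1 |J_m(k_{l_m}r)|^2\, r\,\mathrm{d}r\ge \frac{1-\tau'^2}{2}\,J_m(k_{l_m}\tau')^2,
\end{equation*}
so that the radial ratio is dominated by $\dfrac{\tau^2}{1-\tau'^2}\cdot\dfrac{J_m(k_{l_m}\tau)^2}{J_m(k_{l_m}\tau')^2}$. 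Everything therefore reduces to showing that $J_m(k_{l_m}\tau)/J_m(k_{l_m}\tau')\to 0$.

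For the third step I would invoke the Debye (uniform large-order) asymptotic expansion valid in the non-oscillatory regime: writing $k_{l_m}r=m\,b_m(r)$ with $b_m(r)=k_{l_m}r/m\in(0,1)$ bounded away from $0$ and $1$, one has
\begin{equation*}
J_m(mb)=\frac{e^{-m\,\eta(b)}}{\sqrt{2\pi m}\,(1-b^2)^{1/4}}\bigl(1+O(m^{-1})\bigr),\qquad
\eta(b)=\log\frac{1+\sqrt{1-b^2}}{b}-\sqrt{1-b^2}>0,
\end{equation*}
with $\eta$ strictly decreasing on $(0,1)$. Since $b_m(\tau)\to\tau/\sqrt{M_1}$ and $b_m(\tau')\to\tau'/\sqrt{M_1}$ with $\tau<\tau'$, the exponents satisfy $\eta(b_m(\tau))-\eta(b_m(\tau'))\to c>0$, the $\sqrt{2\pi m}$ prefactors cancel in the ratio, and the remaining algebraic factors stay bounded; hence $J_m(k_{l_m}\tau)/J_m(k_{l_m}\tau')\le C\,e^{-c m}\to 0$. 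Combining the three steps gives \eqref{eq:ff1}.

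The main obstacle is the third step: one must control the two Bessel values \emph{uniformly} in $m$ through a genuinely exponential (not merely algebraic) estimate, and this hinges on $b_m(\tau')$ staying bounded strictly below $1$, i.e. on the turning point lying outside $\Omega$. This is exactly where the hypothesis $M_1>1$ in Assumption A is indispensable: were $M_1\le 1$, the argument $k_{l_m}r$ would enter the oscillatory regime inside the disk, and both the monotonicity and the exponential-gap mechanism would fail.
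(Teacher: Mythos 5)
Your proposal is correct and follows the same overall strategy as the paper: reduce to the ratio of radial integrals $\int_0^\tau r J_m^2(k_{l_m}r)\,\mathrm{d}r \big/ \int_0^1 r J_m^2(k_{l_m}r)\,\mathrm{d}r$, locate $k_{l_m}$ between $j_{m,s_0}/\sqrt{M_1}$ and $j_{m,s_0+2}/\sqrt{M_1}$ via \eqref{eq:jms bdd} so that the argument $k_{l_m}r$ stays below the turning point $m$ throughout $\overline{\Omega}$, and then exploit the exponential smallness of $J_m$ at arguments a fixed fraction below its order. Where you diverge is in the execution of the final quantitative step. The paper bounds the denominator from below by the integral identity $\int_0^1 r J_m^2(k_{l_m}r)\,\mathrm{d}r \geq \tfrac{1}{2}J_m^3(k_{l_m})/\bigl(J_m(k_{l_m})+2k_{l_m}J_m'(k_{l_m})\bigr)$ and controls the resulting ratio $J_m(k_{l_m}\tau)/J_m(k_{l_m})$ by $(1-\delta)^m$, deferring to Theorem 2.6 of \cite{DJLZ21} for that estimate; this produces the extra algebraic factor $144\sqrt{M_1}(\sqrt{M_1}-1)^{-2}m^4\tau^2$, which is harmless against the exponential. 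You instead insert an intermediate radius $\tau'\in(\tau,1)$, use only the monotonicity of $J_m$ on $[0,j_{m,1}']$ for both the upper and lower bounds, and derive the exponential gap directly from the Debye expansion with the explicit exponent $\eta(b)=\log\frac{1+\sqrt{1-b^2}}{b}-\sqrt{1-b^2}$. This buys a self-contained and cleaner argument (no $m^4$ prefactor, no external citation), at the cost of invoking uniform large-order asymptotics; it is valid because $b_m(\tau)\to\tau/\sqrt{M_1}$ and $b_m(\tau')\to\tau'/\sqrt{M_1}$ both lie in a compact subset of $(0,1)$, exactly as you note. All the individual steps check out, including the verification that $k_{l_m}<j_{m,1}'$ for large $m$, which is what legitimizes the monotonicity bounds.
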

\begin{proof}
Let $\beta_{m}=1$ in \eqref{eq:series1}. By direct calculations, we have
\begin{equation*}
\left\|v_{m}\right\|_{L^{2}(\Omega_{\tau})}^{2} =\int_{\Omega_{\tau}}\left|J_{m}(k_{l_{m}}|x|)\right|^{2} \mathrm{d} x =2 \pi \int_{0}^{\tau}rJ^{2}_{m}(k_{l_{m}} r)
\mathrm{d} r,
\end{equation*}
which in particular gives that
\begin{equation*}
\left\|v_{m}\right\|_{L^{2}(\Omega)}^{2} =2 \pi
\int_{0}^{1}rJ^{2}_{m}(k_{l_{m}} r)  \mathrm{d} r\
\text{.}
\end{equation*}
Combining
\begin{equation*}
\frac{j_{m,s_0}}{\sqrt{M_1}} \leq z_{m,s_{0}} \leq k_{l_{m}} \leq z_{m,s_{0}+1} \leq \frac{j_{m,s_0+2}}{\sqrt{M_1}} ,\quad m=m_0+1,m_0+2,...
\end{equation*}
and the estimate \eqref{eq:jms bdd}, we know that for any $s_0 \in \mathbb{N}$, there exists $m_1(M_1,\sigma)\in \mathbb{N}$ such that when $m>m_1(M_1, \sigma)$,
the following estimate holds
\begin{equation*}
\frac{1}{\sqrt{M_1}} \leq \frac{k_{l_{m}}}{m} \leq \frac{1+\sqrt{M_1}}{2\sqrt{M_1}}.
\end{equation*}

Similar to the arguments in the proof of Theorem\,2.6 in \cite{DJLZ21}, for any $\tau \in (0,1)$,  there exists $\delta(\tau, M_1, \sigma) > 0$ such that
\begin{equation*}
\begin{split}
\frac{\left\|v_{m}\right\|_{L^{2}(\Omega_{\tau})}^{2}}{\left\|v_{m}\right\|_{L^{2}(\Omega)}^{2}}
&\leq \displaystyle \frac{\int_{0}^{\tau}rJ^2_{m}(k_{l_{m}}r)\mathrm{d}r}
{\int_{0}^{1}rJ^2_{m}(k_{l_{m}}r)\mathrm{d}r}\leq \displaystyle \frac{\tau^2J^2_{m}(k_{l_{m}}\tau)}{\displaystyle  \frac{\frac{1}{2}J^3_{m}(k_{l_{m}})}{J_{m}(k_{l_{m}})+2k_{l_{m}}J'_{m}(k_{l_{m}})}}\\
&\leq \displaystyle
2\tau^2\left(\frac{J_{m}(k_{l_{m}}\tau)}{J_{m}(k_{l_{m}})}\right)^2\left(1+2k_{l_{m}}\frac{J'_{m}(k_{l_{m}})}{J_{m}(k_{l_{m}})}\right)\\
&\leq \displaystyle
36\left(\sqrt{M_1}-1\right)m^4\tau^2\left(\frac{J_{m}(k_{l_{m}}\tau)}{J_{m}(k_{l_{m}})}\right)^2\\
&\leq \displaystyle 144\frac{\sqrt{M_1}}{(\sqrt{M_1}-1)^2}m^4\tau^2\Big(1-\delta(\tau, M_1, \sigma)\Big)^{2m},
\end{split}
\end{equation*}
which readily implies \eqref{eq:ff1}. 

The proof is complete.
\end{proof}

\subsection{Three-dimensional result}
The proof of Theorem~\ref{thm:main1} in three dimensions follows a similar argument to that of the two-dimensional case in Theorem~\ref{thm:surloc} . In what follows, we only sketch the necessary modifications in what follows. 

In three dimensions, we let $x=(r\cos\theta_1,r\sin\theta_1\cos\theta_2,r\sin\theta_1\sin\theta_2)\in\mathbb{R}^3$ denote the polar coordinate, where $r=|x|$, $\theta_1\in [0, \pi]$ and $\theta_2\in [0, 2\pi]$. Let $Y_{m}^{l}$ be the spherical harmonic function of order $m$ and degree $l$, and $j_{m}(|x|):=\sqrt{\frac{\pi}{2r}} J_{m+1/2}(r)$ be the {spherical Bessel function} \cite{CK, LZ}. The solutions to \eqref{eq:trans1} in $\mathbb{R}^3$ have the following Fourier expansions:
\begin{equation}
\begin{aligned}
u(x)&=\sum_{m=0}^{\infty} \sum_{l=-m}^{m} \alpha_{m}^{l} \phi_m(r;k) Y_{m}^{l}(\hat{x}),\\
v(x)&=\sum_{m=0}^{\infty} \sum_{l=-m}^{m} \beta_{m}^{l}
j_{m}(k r) Y_{m}^{l}(\hat{x}),
\end{aligned}
\end{equation}
where $\alpha_{l}^{m}, \beta_{l}^{m}\in\mathbb{C}$.
Set
\begin{equation}\label{fourier3}
u_{m}(x)=\alpha_m\phi_m(r; k)Y_{m}^{l}(\hat{x}),\quad v_{m}(x)=\beta_m j_m(kr)Y_{m}^{l}(\hat{x}).
\end{equation}
Then we have
\begin{equation}
\sigma\phi_m^{\prime\prime}+\left(\sigma^{\prime}+\frac{2\sigma}{r}\right)\phi_m^{\prime}+\left(
k^2\mathbf{n}^2-\frac{m(m+1)}{r^2}\sigma\right)\phi_m=0.
\end{equation}
where the derivatives are associated to the variable $r$. Furthermore, we assume that
\begin{equation}\label{eq:def psim}
\psi_m(r;k)=r\sigma^{\frac{1}{2}}(r)\phi_m(r;k).
\end{equation}
Then
\begin{equation}
\psi_m^{\prime\prime}+\left(\frac{k^2\mathbf{n}^2}{\sigma}-\frac{m(m+1)}{r^2}+\frac{\sigma^{\prime2}}{4\sigma^2}-\frac{\sigma^{\prime}}{r\sigma}-\frac{\sigma^{\prime\prime}}{2\sigma}\right)\psi_m=0,
\end{equation}
which implies
\begin{equation}
\psi_m^{\prime\prime}+\left\{\left(\frac{k^2\mathbf{n}^2}{\sigma}+\frac{\sigma^{\prime2}}{4\sigma^2}-\frac{\sigma^{\prime\prime}}{2\sigma}\right)
-\frac{\sigma^{\prime}}{\sigma r}-m\left(m+1\right)\frac{1}{r^2}\right\}\psi_m=0.
\end{equation}

\begin{lemma}\label{lem:4}
Under Assumption A, for any $m \geq 1$ the function $\psi_m(r;k)$ in \eqref{eq:def psim} possesses at least one zero point in $\left(\frac{j_{m+\frac{1}{2},s_0}}{k\sqrt{M_1}}, \frac{j_{m+\frac{1}{2}, s_0+1}}{k\sqrt{M_1}}\right)$ for any $s_0 \in \mathbb{N}$.
\end{lemma}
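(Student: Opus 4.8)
The plan is to mirror the Sturm-comparison argument used in the proof of Lemma~\ref{lem:2}, adapting it to the three-dimensional centrifugal term $m(m+1)/r^2$ and to the half-integer order Bessel functions that govern the radial part in $\mathbb{R}^3$.

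First I would set up an auxiliary comparison equation whose solutions vanish precisely at the points $\frac{j_{m+1/2,s}}{k\sqrt{M_1}}$. Define
\[
P_m(r;k) = M_1 k^2 - \frac{m(m+1)}{r^2}, \qquad
Q_m(r;k) = \left(\frac{k^2\mathbf{n}^2}{\sigma}+\frac{\sigma'^2}{4\sigma^2}-\frac{\sigma''}{2\sigma}\right) - \frac{\sigma'}{\sigma r} - \frac{m(m+1)}{r^2},
\]
so that $Q_m$ is exactly the coefficient in the $\psi_m$-equation associated to \eqref{eq:def psim}. Using the Liouville identity that $\sqrt{t}\,J_{m+1/2}(t)$ solves $w'' + (1 - m(m+1)/t^2)w = 0$ (here $(m+1/2)^2 - 1/4 = m(m+1)$), together with the scaling $t = k\sqrt{M_1}\,r$, I would verify that
\[
\xi_m(r) := \sqrt{r}\,J_{m+1/2}\!\left(k\sqrt{M_1}\,r\right)
\]
solves $(\mathbf{P}1)$, namely $\xi_m'' + P_m(r;k)\xi_m = 0$, and that its positive zeros are exactly $\left\{\frac{j_{m+1/2,s}}{k\sqrt{M_1}}\right\}_{s\in\mathbb{N}}$.

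Next I would verify the pointwise inequality $P_m(r;k) \leq Q_m(r;k)$ on $(0,1]$. Subtracting,
\[
Q_m - P_m = \left(\frac{k^2\mathbf{n}^2}{\sigma} - M_1 k^2\right) + \frac{\sigma'^2 - 2\sigma''\sigma}{4\sigma^2} - \frac{\sigma'}{\sigma r},
\]
each of the three summands being nonnegative: the first by (A4), the second by (A5), and the third by (A3) (recalling $\sigma>0$ and $r>0$). This is the only place the structural hypotheses on $\sigma$ and $\mathbf{n}$ enter, and it is the exact three-dimensional analogue of \eqref{eq:Pm Qm}; the sole bookkeeping difference from the planar case is that the dimension produces the term $-\sigma'/(\sigma r)$ in place of $-\sigma'/(2\sigma r)$.

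Finally, with $P_m \leq Q_m$ established and $\xi_m$ possessing the consecutive zeros $\frac{j_{m+1/2,s_0}}{k\sqrt{M_1}}$ and $\frac{j_{m+1/2,s_0+1}}{k\sqrt{M_1}}$, I would invoke the singular Sturm comparison theorem from \cite{DU2010} to conclude that the origin-regular solution $\psi_m$ of $(\mathbf{P}2)$ has at least one zero in the open interval $\left(\frac{j_{m+1/2,s_0}}{k\sqrt{M_1}}, \frac{j_{m+1/2,s_0+1}}{k\sqrt{M_1}}\right)$, which is the claim. I do not expect a genuine obstacle here, since the argument runs parallel to Lemma~\ref{lem:2}; the one point deserving care — rather than a true difficulty — is confirming that the regular branch $\psi_m=r\sigma^{1/2}\phi_m$ (which vanishes like $r^{m+1}$ at the origin) is the correct solution, so that the comparison is applied on a subinterval bounded away from the singularity at $r=0$.
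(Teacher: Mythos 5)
Your proposal is correct and follows essentially the same route as the paper: the Liouville transform identifies $\sqrt{r}\,J_{m+1/2}(k\sqrt{M_1}\,r)$ as the solution of the comparison equation $(\mathbf{P}1)$, the pointwise inequality $P_m\leq Q_m$ is checked termwise from (A3)--(A5), and the singular Sturm comparison theorem of \cite{DU2010} yields the zero in each interval $\left(\frac{j_{m+1/2,s_0}}{k\sqrt{M_1}},\frac{j_{m+1/2,s_0+1}}{k\sqrt{M_1}}\right)$. Your version is in fact slightly more careful than the paper's: you write the centrifugal term in $Q_m$ with the sign that actually matches the $\psi_m$-equation, and you note explicitly that the comparison is applied to the origin-regular branch.
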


\begin{proof}
The condition (A1)
guarantees the non-degeneracy of $u$ in equation \eqref{eq:trans1}. Let
\begin{equation}
\left\{
\begin{array}{lll}
P_{m}(r;k)&=&M_3k^2-m(m+1)\frac{1}{r^2},\\
Q_m(r;k)&=&\left(\frac{k^2\mathbf{n}^2}{\sigma}+\frac{\sigma^{\prime2}}{4\sigma^2}-\frac{\sigma^{\prime\prime}}{2\sigma}\right)
-\frac{\sigma^{\prime}}{\sigma r}+m(m+1)\frac{1}{r^2},
\end{array}
\right.
\end{equation}
and it follows from (A2)-(A5) that
\begin{equation}\label{eq:Pm Qm3}
P_{m}(r;k)\leq Q_m(r;k).
\end{equation}
Consider the following differential equations:
\begin{eqnarray}
(\mathbf{P}1)~~ \qquad \xi_m^{\prime\prime}+P_{m}(r;k)\xi_m=0,   \\
(\mathbf{P}2) \qquad \psi_m^{\prime\prime}+Q_m(r;k)\psi_m=0.
\end{eqnarray}
Then for each fixed $m\in\mathbb{N}$ , the solutions of the $(\mathbf{P}1)$ are denoted by
$\sqrt{r}J_{m+\frac{1}{2}}(k\sqrt{M_1}r)$, whose roots are given by $\{\frac{j_{{m+\frac{1}{2}},s_0}}{k\sqrt{M_1}}\}_{s_0 \in \mathbb{N}}$.
It follows from the Singular Strum theorem in \cite{DU2010} and \eqref{eq:Pm Qm} that the solutions of
$(\mathbf{P}2)$ has at least one zero in
$\left(\frac{j_{{m+\frac{1}{2}},s_0}}{k\sqrt{M_1}},\frac{j_{{m+\frac{1}{2}},s_0+1}}{k\sqrt{M_1}}\right)$ for any $s_0 \in \mathbb{N}$, denoted by $t_{{m+\frac{1}{2}},s_{0}}(k)$.
\end{proof}

Since the solutions of $(\mathbf{P}1)$ and $(\mathbf{P}2)$ are continuous with respect to the parameter $k$, there exists a number $z_{{m+\frac{1}{2}},s_{0}}\in\left(\frac{j_{{m+\frac{1}{2}},s_{0}}}{\sqrt{M_1}}, \frac{j_{{m+\frac{1}{2}},s_{0}+1}}{\sqrt{M_1}}\right)$, such that
\begin{equation}
\psi_{m}(1;z_{{m+\frac{1}{2}},s_{0}})=0.
\end{equation}

Let $\beta_m=1$ in \eqref{fourier3}. For the transmission condition, it holds that
\begin{equation}
\alpha_m=\frac{\sqrt{\frac{\pi}{2}}J_{m+\frac{1}{2}}(k)}{\phi_{m}(1;k)}.
\end{equation}
Set $f_{m}=\sigma(1)\frac{\partial\phi_{m}(1;k)}{\partial r}j_{m}(k)-kj_{m}^{\prime}(k)\phi_{m}(1;k)$. Using the recursive formula of the Bessel functions \cite{CK}, we have
\begin{equation}\label{eq:fmzs0fmzs13}
f_m(k)=\sigma(1)\frac{\partial \phi_m}{\partial r}(k,1)J_{m+\frac{1}{2}}(k)+((m+1)J_{{m+\frac{1}{2}}}(k)-kJ_{m-\frac{1}{2}}(k))\phi_m(k,1),\quad m\geq 1.
\end{equation}

Next, we find the roots of $f_m(k)$ within the interval $\left(\frac{j_{{m+\frac{1}{2}},s_{0}}}{\sqrt{M_1}}, \frac{j_{{m+\frac{1}{2}},s_{0}+2}}{\sqrt{M_1}}\right)$.

\begin{lemma}\label{lem:3}
Under Assumption A, for any given $s_0 \in \mathbb{N}$, there exists $m_0(\mathbf{n},\sigma)\in \mathbb{N}$, depending on $\mathbf{n}$ and $\sigma$, such that when $m>m_0(\mathbf{n},\sigma)$, the function $f_m(k)$ in \eqref{eq:fmzs0fmzs13} possesses at least one root in $\left(\frac{j_{{m+\frac{1}{2}},s_{0}}}{\sqrt{M_1}}, \frac{j_{{m+\frac{1}{2}},s_{0}+2}}{\sqrt{M_1}}\right)$.
\end{lemma}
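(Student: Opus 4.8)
The plan is to run the proof of Lemma~\ref{lem:3.2} essentially verbatim, making only the substitutions dictated by the three-dimensional reduction: the spherical Bessel function $j_m(t)=\sqrt{\pi/(2t)}\,J_{m+\frac12}(t)$ now appears in the $v$-part, so the integer-order $J_m$ is replaced by the half-integer order $J_{m+\frac12}$ and the zeros $j_{m,s}$ by $j_{m+\frac12,s}$. First I would note that the asymptotic bound \eqref{eq:jms bdd} is valid with the order $m$ replaced by $m+\frac12$; hence for each fixed $s_0$ there is $m_0=m_0(\mathbf{n},\sigma)$ so that, for $m>m_0$, one has $\frac{j_{m+\frac12,s_0}}{\sqrt{M_1}}<j'_{m+\frac12,1}$. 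Consequently the whole interval $\left(\frac{j_{m+\frac12,s_0}}{\sqrt{M_1}},\frac{j_{m+\frac12,s_0+2}}{\sqrt{M_1}}\right)$ lies inside $[0,j'_{m+\frac12,1}]$, on which $J_{m+\frac12}$ is positive and increasing, and by the construction preceding the statement this interval contains two consecutive zeros $z_0:=z_{m+\frac12,s_0}$ and $z_1:=z_{m+\frac12,s_0+1}$ of $\psi_m(1;\cdot)$, hence of $\phi_m(1;\cdot)$.

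At each such zero $\phi_m(1;z)=0$, so the mixed term in \eqref{eq:fmzs0fmzs13} drops out and $f_m(z)=\sigma(1)\,\partial_r\phi_m(1;z)\,J_{m+\frac12}(z)$. Therefore
\[
f_m(z_0)\,f_m(z_1)=\sigma^2(1)\,J_{m+\frac12}(z_0)\,J_{m+\frac12}(z_1)\,\partial_r\phi_m(1;z_0)\,\partial_r\phi_m(1;z_1).
\]
Since $z_0,z_1\in[0,j'_{m+\frac12,1}]$, both Bessel factors are positive (they sit below the first maximum), while the two radial-derivative factors have opposite signs because $z_0,z_1$ are consecutive roots of $\phi_m(1;\cdot)$, exactly as in the derivation of \eqref{eq:rr1}. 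Hence $f_m(z_0)f_m(z_1)<0$, and Rolle's theorem (equivalently the intermediate value theorem) furnishes a root of $f_m$ in $(z_0,z_1)\subset\left(\frac{j_{m+\frac12,s_0}}{\sqrt{M_1}},\frac{j_{m+\frac12,s_0+2}}{\sqrt{M_1}}\right)$, which is the desired conclusion.

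The only step requiring genuine care is the opposite-sign property of the radial derivatives $\partial_r\phi_m(1;z_0)$ and $\partial_r\phi_m(1;z_1)$; this rests on the oscillation of $\phi_m(1;\cdot)$ as the spectral parameter sweeps across consecutive boundary zeros, together with the simplicity of those zeros. Both facts are inherited unchanged from the two-dimensional argument, and one should merely confirm that passing to half-integer order perturbs neither the Sturm comparison in Lemma~\ref{lem:4} nor the monotonicity window $[0,j'_{m+\frac12,1}]$ secured by the Airy-zero asymptotics \eqref{eq:jms bdd}. As these are the same ingredients already used above, the three-dimensional case follows without new difficulties.
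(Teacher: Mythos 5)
Your proposal is correct and follows essentially the same route as the paper's own proof: locate two consecutive zeros $z_{m+\frac12,s_0}$, $z_{m+\frac12,s_0+1}$ of $\phi_m(1;\cdot)$ inside the target interval, use that this interval lies in the monotonicity window $[0,j'_{m+\frac12,1}]$ so the Bessel factors in $f_m$ are positive there, combine this with the sign alternation of $\partial_r\phi_m(1;\cdot)$ at consecutive roots to get $f_m(z_{m+\frac12,s_0})f_m(z_{m+\frac12,s_0+1})<0$, and conclude by the intermediate value theorem. Your direct positivity argument for the Bessel factors is in fact slightly cleaner than the paper's intermediate comparison with $J_{m+\frac12}^2(j'_{m+\frac12,1})$, but the substance is identical.
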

\begin{proof}
For any given $s_0\in\mathbb{N}$, we have for $m(s_0)\in\mathbb{N}$ large enough that
\begin{equation}
\frac{j_{{m+\frac{1}{2}},s_{0}}}{\sqrt{M_1}}<j_{{m+\frac{1}{2}},1}^{\prime}.
\end{equation}

Consider the interval $\left(\frac{j_{{m+\frac{1}{2}},s_{0}}}{\sqrt{M_1}}, \frac{j_{{m+\frac{1}{2}},s_{0}+2}}{\sqrt{M_1}}\right)$. There exist at least two consecutive zeros $z_{{m+\frac{1}{2}},s_{0}}$ and $z_{{m+\frac{1}{2}},s_{0}+1}$ of
$\psi_m(1;k)$, as well as $\phi_m(1;k)$. Using such a fact, together with the monotonicity of $J_{m+\frac{1}{2}}$ within the interval $[0,j_{{m+\frac{1}{2}},1}^{'}]$, one can show that
\begin{equation}\label{eq:Jzms0Jzms013}
\begin{array}{ll}
J_{{m+\frac{1}{2}}}(z_{{m+\frac{1}{2}},s_{0}})J_{{m+\frac{1}{2}}}(z_{{m+\frac{1}{2}},s_{0}+1})< J_{m+\frac{1}{2}}^2(j_{{m+\frac{1}{2}},1}^{'}).
\end{array}
\end{equation}

By virtue of \eqref{eq:Jzms0Jzms013} and \eqref{eq:fmzs0fmzs13}, one can show that
\begin{equation}
\begin{split}
&f_m(z_{{m+\frac{1}{2}},s_{0}})f_m(z_{{m+\frac{1}{2}},s_{0}+1})\\
=&\sigma^2(1)J_{m+\frac{1}{2}}(z_{{m+\frac{1}{2}},s_{0}})J_{m+\frac{1}{2}}(z_{{m+\frac{1}{2}},s_{0}+1})
\frac{\partial \phi_m(1;z_{{m+\frac{1}{2}},s_{0}})}{\partial r}\frac{\partial \phi_m(1;z_{{m+\frac{1}{2}},s_{0}+1})}{\partial r}\\
<& \sigma^2(1)J_{m+\frac{1}{2}}^2(j_{{m+\frac{1}{2}},1}^{\prime})\frac{\partial \phi_m(1;z_{{m+\frac{1}{2}},s_{0}})}{\partial r}\frac{\partial \phi_m(1;z_{{m+\frac{1}{2}},s_{0}+1})}{\partial r}.
\end{split}
\end{equation}
Since $z_{{m+\frac{1}{2}},s_{0}}$, $z_{{m+\frac{1}{2}},s_{0}+1}$ are two consecutive roots, it is clear that $\frac{\partial \phi_m(1;z_{{m+\frac{1}{2}},s_{0}})}{\partial r}$ and $\frac{\partial \phi_m(1;z_{{m+\frac{1}{2}},s_{0}+1})}{\partial r}$ have opposite signs. Hence,
\begin{equation}\label{eq:gg1}
f_m(z_{{m+\frac{1}{2}},s_{0}})f_m(z_{{m+\frac{1}{2}},s_{0}+1})<0.
\end{equation}
By applying Rolle's theorem to \eqref{eq:gg1}, one sees that there exists at least one root of $f_m(k)$ in $(z_{{m+\frac{1}{2}},s_{0}},z_{{m+\frac{1}{2}},s_{0}+1})$.

The proof is complete.  
\end{proof}

In the following, for a given $s_0\in\mathbb{N}$, we let $k_{l_{m}} :=k_{m,s_{0}}\in(z_{{m+\frac{1}{2}},s_{0}},z_{{m+\frac{1}{2}},s_{0}+1}),$ $m=m_0+1,m_0+2,... ,$ denote the transmission eigenvalues determined in Lemma~\ref{lem:3}, where $m_0 = m_0(\mathbf{n},\sigma)$ is chosen to be sufficiently large. 


\begin{theorem}\label{thm:surloc2}
Consider the same setup as Theorem~\ref{thm:main1} in $\mathbb{R}^3$ and assume that $\mathbf{n}>1$ and $\tau\in (0, 1)$ is fixed. Let $(w_m, v_m)$ be the pair of eigenfunctions associated with $k_{l_m}$ defined above. Then it holds that
\begin{equation}\label{eq:hh1}
\lim\limits_{m\rightarrow\infty}\frac{\|v_m\|_{L^2(\Omega_{\tau})}}{\|v_m\|_{L^2(\Omega)}}=0.
\end{equation}
\end{theorem}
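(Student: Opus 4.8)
The plan is to follow the two-dimensional argument of Theorem~\ref{thm:surloc} almost verbatim, after reducing the three-dimensional $L^2$-ratio to a Bessel-integral ratio of half-integer order. First I would compute the two norms with $\beta_m=1$. Since the spherical harmonics $Y_m^l$ are orthonormal on $\mathbb{S}^2$, the angular factor cancels in the quotient, leaving a purely radial expression with weight $r^2\,\mathrm{d}r$. Invoking the identity $j_m(t)=\sqrt{\pi/(2t)}\,J_{m+1/2}(t)$ gives $r^2 j_m^2(k_{l_m}r)=\frac{\pi}{2k_{l_m}}\,r\,J_{m+1/2}^2(k_{l_m}r)$, so the common prefactor $\frac{\pi}{2k_{l_m}}$ cancels and
\begin{equation*}
\frac{\|v_m\|_{L^2(\Omega_\tau)}^2}{\|v_m\|_{L^2(\Omega)}^2}
=\frac{\int_0^\tau r\,J_{m+1/2}^2(k_{l_m}r)\,\mathrm{d}r}{\int_0^1 r\,J_{m+1/2}^2(k_{l_m}r)\,\mathrm{d}r}.
\end{equation*}
This is exactly the quantity treated in Theorem~\ref{thm:surloc}, with the integer order $m$ replaced by the half-integer order $m+1/2$.

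Next I would localize the eigenvalue. By construction $k_{l_m}\in(z_{m+\frac12,s_0},z_{m+\frac12,s_0+1})\subset\bigl(j_{m+\frac12,s_0}/\sqrt{M_1},\,j_{m+\frac12,s_0+2}/\sqrt{M_1}\bigr)$. The estimate \eqref{eq:jms bdd}, which is valid for arbitrary real order and hence applies with $\nu=m+\tfrac12$, yields $j_{m+\frac12,s}=(m+\tfrac12)(1+o(1))$ for each fixed $s$, so that $k_{l_m}/(m+\tfrac12)\to 1/\sqrt{M_1}$. Because $M_1>1$ by (A2), there exists $m_1=m_1(M_1,\sigma)$ such that for $m>m_1$
\begin{equation*}
\frac{1}{\sqrt{M_1}}\le \frac{k_{l_m}}{m+\tfrac12}\le \frac{1+\sqrt{M_1}}{2\sqrt{M_1}}<1.
\end{equation*}
In particular $k_{l_m}<m+\tfrac12$, so that both $k_{l_m}\tau$ and $k_{l_m}$ sit strictly below the turning point of $J_{m+1/2}$, where $J_{m+1/2}$ is positive and strictly increasing. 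This is the structural fact that drives the whole estimate.

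Then I would run the same chain of inequalities as in Theorem~\ref{thm:surloc}, now with order $m+\tfrac12$: bound the numerator from above by monotonicity of $J_{m+1/2}$ on $(0,\tau)$, and bound the denominator from below via Lommel's integral formula for $\int_0^1 r J_{m+1/2}^2(k_{l_m}r)\,\mathrm{d}r$ together with the turning-point monotonicity, thereby reducing the quotient to $\bigl(J_{m+1/2}(k_{l_m}\tau)/J_{m+1/2}(k_{l_m})\bigr)^2$ times a polynomial-in-$m$ factor. Finally, mirroring the argument behind Theorem~2.6 of \cite{DJLZ21} (equivalently, the Debye uniform asymptotics $J_\nu(\nu x)\sim e^{-\nu\eta(x)}(\cdots)$ for $0<x<1$, with $\eta$ strictly decreasing), the inequality $k_{l_m}\tau<k_{l_m}<m+\tfrac12$ furnishes a constant $\delta=\delta(\tau,M_1,\sigma)>0$ with
\begin{equation*}
\left(\frac{J_{m+1/2}(k_{l_m}\tau)}{J_{m+1/2}(k_{l_m})}\right)^2\le (1-\delta)^{2m},
\end{equation*}
which produces a bound of the form $C\,m^4\,\tau^2\,(1-\delta)^{2m}\to 0$ and hence \eqref{eq:hh1}.

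The only genuinely new ingredient relative to the planar case is bookkeeping with the half-integer order, which is transparent once \eqref{eq:jms bdd} is read with $\nu=m+\tfrac12$. The main technical obstacle — shared with Theorem~\ref{thm:surloc} — is establishing the exponential-in-$m$ decay of the Bessel ratio uniformly over the admissible window in which $k_{l_m}$ varies with $m$, with a rate $\delta$ depending only on $\tau$, $M_1$ (and $\sigma$ through $M_1$). The reduction to order $m+\tfrac12$ and the sub-turning-point monotonicity are precisely what allow the two-dimensional decay estimate to be transferred without change.
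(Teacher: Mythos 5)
Your proposal is correct and follows essentially the same route as the paper: reduce the three-dimensional ratio to the weight-$r$ integral of $J_{m+1/2}^2$ via $j_m(t)=\sqrt{\pi/(2t)}\,J_{m+1/2}(t)$, localize $k_{l_m}$ in $\bigl(j_{m+\frac12,s_0}/\sqrt{M_1},\,j_{m+\frac12,s_0+2}/\sqrt{M_1}\bigr)$ using \eqref{eq:jms bdd} so that $k_{l_m}$ stays below the turning point, and then run the monotonicity/Lommel chain and the exponential Bessel-ratio decay from Theorem~2.6 of \cite{DJLZ21} exactly as in the two-dimensional Theorem~\ref{thm:surloc}. The only cosmetic difference is that you normalize by $m+\tfrac12$ where the paper writes $k_{l_m}/m$, which changes nothing in the argument.
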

\begin{proof}
Let $\beta_{m}=1$ in \eqref{eq:series1}. By direct calculations, one has that
\begin{equation*}
\left\|v_{m}\right\|_{L^{2}(\Omega_{\tau})}^{2} =\int_{\Omega_{\tau}}\left|j_{m}(k_{l_{m}}|x|)\right|^{2} \mathrm{d} x = \pi^2 \int_{0}^{\tau}rJ^{2}_{m+\frac{1}{2}}(k_{l_{m}} r),
\mathrm{d} r.
\end{equation*}
which in particular gives that
\begin{equation*}
\left\|v_{m}\right\|_{L^{2}(\Omega)}^{2} = \pi^2
\int_{0}^{1}rJ^{2}_{m+\frac{1}{2}}(k_{l_{m}} r)  \mathrm{d} r\
\text{.}
\end{equation*}
Combining
\begin{equation*}
\frac{j_{{m+\frac{1}{2}},s_0}}{\sqrt{M_1}} \leq z_{{m+\frac{1}{2}},s_{0}} \leq k_{l_{m}} \leq z_{{m+\frac{1}{2}},s_{0}+1} \leq \frac{j_{{m+\frac{1}{2}},s_0+2}}{\sqrt{M_1}} ,\quad m=m_0+1,m_0+2,...
\end{equation*}
and the estimate \eqref{eq:jms bdd}, we know that for any $s_0 \in \mathbb{N}$, there exists $m_3(M_1,\sigma)\in \mathbb{N}$ such that when $m>m_3(M_1, \sigma)$,
the following estimate holds
\begin{equation*}
\frac{1}{\sqrt{M_1}} \leq \frac{k_{l_{m}}}{m} \leq \frac{1+\sqrt{M_1}}{2\sqrt{M_1}}.
\end{equation*}

Similar to the arguments in the proof of Theorem\,2.6 in \cite{DJLZ21}, for any $\tau \in (0,1)$,  there exists $\delta(\tau, M_1, \sigma) > 0$ such that
\begin{equation}
\begin{split}
\frac{\left\|v_{m}\right\|_{L^{2}(\Omega_{\tau})}^{2}}{\left\|v_{m}\right\|_{L^{2}(\Omega)}^{2}}&\leq \frac{\int_{0}^{\tau}rJ^2_{m+\frac{1}{2}}(k_{l_{m}}r)\mathrm{d}r}
{\int_{0}^{1}rJ^2_{m+\frac{1}{2}}(k_{l_{m}}r)\mathrm{d}r} \leq \displaystyle \frac{\tau^2J^2_{m+\frac{1}{2}}(k_{l_{m}}\tau)}{\frac{\frac{1}{2}J^3_{m+\frac{1}{2}}(k_{l_{m}})}{J_{m+\frac{1}{2}}(k_{l_{m}})+2k_{l_{m}}J'_{m+\frac{1}{2}}(k_{l_{m}})}}\\
&\leq
2\tau^2\left(\frac{J_{m+\frac{1}{2}}(k_{l_{m}}\tau)}{J_{m+\frac{1}{2}}(k_{l_{m}})}\right)^2\left(1+2k_{l_{m}}\frac{J'_{m+\frac{1}{2}}(k_{l_{m}})}{J_{m+\frac{1}{2}}(k_{l_{m}})}\right)\\
&\leq
36(\sqrt{M_1}-1)m^4\tau^2\left(\frac{J_{m+\frac{1}{2}}(k_{l_{m}}\tau)}{J_{m+\frac{1}{2}}(k_{l_{m}})}\right)^2\\
&\leq144\frac{\sqrt{M_1}}{(\sqrt{M_1}-1)^2}m^4\tau^2(1-\delta(\tau,M_1,\sigma))^{2m},
\end{split}
\end{equation}
which readily gives \eqref{eq:hh1}. 

The proof is complete.
\end{proof}

\section{Proof of Theorem~\ref{thm:main12}}

This section is devoted to the proof of Theorem~\ref{thm:main12}. In what follows, we assume that $\sigma$ and $\mathbf{n}$ are both constant. We shall construct a sequence of transmission eigenvalues $k_{l_{m}}$ and prove that the corresponding eigenfunctions $(u_m , v_m)$ are both boundary-localized.

First, we know that the solutions to \eqref{eq:trans1} have the following Fourier representations:
\begin{equation}
\begin{aligned}
u(x)=\sum_{m=0}^\infty \alpha_m J_{m}(\frac{k\mathbf{n}r}{\sqrt{\sigma}}) e^{\mathrm{i}m\theta}, \quad v(x)=\sum_{m=0}^\infty \beta_m J_m(kr) e^{\mathrm{i}m\theta},\quad N=2,\\
u(x)=\sum_{m=0}^{\infty} \sum_{l=-m}^{m} \alpha_{m}^{l} j_{m}(\frac{k\mathbf{n}r}{\sqrt{\sigma}}) Y_{m}^{l}(\hat{x}),\quad v(x)=\sum_{m=0}^{\infty} \sum_{l=-m}^{m} \beta_{m}^{l}
j_{m}(k r) Y_{m}^{l}(\hat{x}),\quad N=3.
\end{aligned}
\end{equation}
Set
\begin{equation}\label{eq:bilocalized1}
\begin{aligned}
u_m(x)=\alpha_m J_{m}(\frac{k\mathbf{n}r}{\sqrt{\sigma}}) e^{\mathrm{i}m\theta}, \quad v_m(x)=\beta_m J_m(kr) e^{\mathrm{i}m\theta},\quad N=2,\\
u_m(x)=\alpha_{m} j_{m}(\frac{k\mathbf{n}r}{\sqrt{\sigma}}) Y_{m}^{l}(\hat{x}),\quad v_m(x)=\beta_{m}
j_{m}(k r) Y_{m}^{l}(\hat{x}),\quad N=3.
\end{aligned}
\end{equation}
Let $\beta_m=1$ in \eqref{eq:bilocalized1},  for the transmission condition, then set
\begin{equation}
\begin{aligned}
f^{(2)}_m(k)=J_{m}(\frac{k\mathbf{n}}{\sqrt{\sigma}})J^{\prime}_{m}(k)-\sqrt{\sigma}\mathbf{n}J^{\prime}_{m}(\frac{k\mathbf{n}}{\sqrt{\sigma}})J_{m}(k),\quad N=2,\\
f^{(3)}_m(k)=J_{m+\frac{1}{2}}(\frac{k\mathbf{n}}{\sqrt{\sigma}})j^{\prime}_{m}(k)-\sqrt{\sigma}\mathbf{n}j^{\prime}_{m}(\frac{k\mathbf{n}}{\sqrt{\sigma}})J_{m+\frac{1}{2}}(k),\quad N=3.
\end{aligned}
\end{equation}
\begin{lemma}
Suppose that $\mathbf{n}>\sqrt{\sigma}$. For any given $s_0 \in \mathbb{N}$, there exists $m_0(\mathbf{n},\sigma)\in \mathbb{N}$, depending on $\mathbf{n}$ and $\sigma$, such that when $m>m_0(\mathbf{n},\sigma)$, $f^{(2)}_m(k)$ in \eqref{eq:fmzs0fmzs13} possesses at least one root in $\left(\frac{\sqrt{\sigma}j_{{m},s_{0}}}{\mathbf{n}}, \frac{\sqrt{\sigma}j_{{m},s_{0}+1}}{\mathbf{n}}\right)$ and $f^{(3)}_m(k)$ in \eqref{eq:fmzs0fmzs13} possesses at least one root in $\left(\frac{\sqrt{\sigma}j_{{m+\frac{1}{2}},s_{0}}}{\mathbf{n}}, \frac{\sqrt{\sigma}j_{{m+\frac{1}{2}},s_{0}+1}}{\mathbf{n}}\right)$.
\end{lemma}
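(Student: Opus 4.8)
The plan is to locate a sign change of $f_m^{(2)}$ (resp.\ $f_m^{(3)}$) at the two endpoints of the prescribed interval and then invoke the intermediate value theorem. Write $\rho:=\mathbf{n}/\sqrt{\sigma}$, so that the hypothesis $\mathbf{n}>\sqrt{\sigma}$ is precisely $\rho>1$, and denote the endpoints in the planar case by $k_-:=j_{m,s_0}/\rho$ and $k_+:=j_{m,s_0+1}/\rho$. The decisive observation is that at each endpoint the dilated argument $k_\pm\rho$ hits a zero of $J_m$, so the first term of $f_m^{(2)}$ vanishes there and only the second term survives:
\begin{equation*}
f_m^{(2)}(k_-)=-\sqrt{\sigma}\,\mathbf{n}\,J_m'(j_{m,s_0})\,J_m(k_-),\qquad
f_m^{(2)}(k_+)=-\sqrt{\sigma}\,\mathbf{n}\,J_m'(j_{m,s_0+1})\,J_m(k_+).
\end{equation*}

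First I would examine the product $f_m^{(2)}(k_-)f_m^{(2)}(k_+)$. Since $j_{m,s_0}$ and $j_{m,s_0+1}$ are consecutive zeros of $J_m$, the derivatives $J_m'(j_{m,s_0})$ and $J_m'(j_{m,s_0+1})$ carry opposite signs, so the product of the two derivative factors is negative. It then remains only to control $J_m(k_-)J_m(k_+)$: once $m$ is large both $k_\pm$ lie strictly below $j_{m,1}$ (justified below), whence $J_m(k_\pm)>0$. Consequently $f_m^{(2)}(k_-)f_m^{(2)}(k_+)<0$, and the continuity of $f_m^{(2)}$ furnishes a root in $(k_-,k_+)=\bigl(\sqrt{\sigma}\,j_{m,s_0}/\mathbf{n},\ \sqrt{\sigma}\,j_{m,s_0+1}/\mathbf{n}\bigr)$, as claimed.

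For the three-dimensional statement I would repeat the same endpoint evaluation, now with $k_\pm:=j_{m+\frac{1}{2},s_0}/\rho$ and $j_{m+\frac{1}{2},s_0+1}/\rho$, using that the positive zeros of the spherical Bessel function $j_m$ coincide with those of $J_{m+\frac{1}{2}}$. The only additional bookkeeping is to express $j_m'$ at such a zero in terms of $J_{m+\frac{1}{2}}'$: differentiating $j_m(t)=\sqrt{\pi/(2t)}\,J_{m+\frac{1}{2}}(t)$ and evaluating at a zero of $J_{m+\frac{1}{2}}$ gives
\begin{equation*}
j_m'(j_{m+\frac{1}{2},s})=\sqrt{\tfrac{\pi}{2}}\;j_{m+\frac{1}{2},s}^{-1/2}\,J_{m+\frac{1}{2}}'(j_{m+\frac{1}{2},s}),
\end{equation*}
so that $j_m'$ inherits the alternating sign of $J_{m+\frac{1}{2}}'$ at consecutive zeros. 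The sign analysis of $f_m^{(3)}(k_\pm)$ is then identical to the planar case, and the intermediate value theorem again supplies a root in the stated interval.

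The step I expect to be the crux is the positivity claim $k_\pm<j_{m,1}$ (resp.\ $j_{m+\frac{1}{2},1}$), which is exactly where $\rho>1$ is genuinely used. The requirement is $j_{m,s_0+1}<\rho\,j_{m,1}$, and by the Airy asymptotics \eqref{eq:jms bdd} one has $j_{m,s}=m+O(m^{1/3})$ for each fixed $s$, so $j_{m,s_0+1}/j_{m,1}\to1$ as $m\to\infty$. Since $\rho>1$ is a fixed constant independent of $m$, this ratio eventually drops below $\rho$, which pins down the threshold $m_0(\mathbf{n},\sigma)$; the same estimate applied to $j_{m+\frac{1}{2},s}$ settles the three-dimensional case. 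I would write out the planar argument in full and then indicate only these minor modifications for $N=3$.
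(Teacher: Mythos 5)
Your proposal is correct and follows essentially the same route as the paper: evaluate $f_m^{(2)}$ (resp.\ $f_m^{(3)}$) at the endpoints where the first term vanishes, observe that the surviving derivative factors alternate in sign at consecutive zeros (the paper phrases this via $J_m'(j_{m,s})=J_{m-1}(j_{m,s})$ and interlacing, you use the simplicity of the zeros directly -- an equivalent fact), check that $J_m(k_\pm)>0$ because $k_\pm<j_{m,1}$ for large $m$, and conclude by a sign change. If anything, you are more careful than the paper on the one step where $\mathbf{n}>\sqrt{\sigma}$ and the threshold $m_0$ actually enter, namely the verification $j_{m,s_0+1}<\rho\, j_{m,1}$ via the Airy asymptotics, which the paper leaves implicit.
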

\begin{proof}
First, we have by direct calculations that
\begin{equation}
\begin{aligned}
&f^{(2)}_m\left(\frac{\sqrt{\sigma}j_{{m},s_{0}}}{\mathbf{n}}\right)f^{(2)}_m\left(\frac{\sqrt{\sigma}j_{{m},s_{0}+1}}{\mathbf{n}}\right)\\
=& \sigma\mathbf{n}^2J_{m-1}\left(j_{m,s_0})J_{m-1}(j_{m,s_0+1}\right)\cdot J_m\left(\frac{\sqrt{\sigma}j_{{m},s_{0}}}{\mathbf{n}}\right)J_m\left(\frac{\sqrt{\sigma}j_{{m},s_{0}+1}}{\mathbf{n}}\right),\\
&f^{(3)}_m\left(\frac{\sqrt{\sigma}j_{{m+\frac{1}{2}},s_{0}}}{\mathbf{n}}\right)f^{(3)}_m\left(\frac{\sqrt{\sigma}j_{{m+\frac{1}{2}},s_{0}+1}}{\mathbf{n}}\right)\\
=&\sigma\mathbf{n}^2J_{m-\frac{1}{2}}(j_{m+\frac{1}{2},s_0})J_{m-\frac{1}{2}}(j_{m+\frac{1}{2},s_0+1})\cdot J_{m+\frac{1}{2}}\left(\frac{\sqrt{\sigma}j_{{m},s_{0}}}{\mathbf{n}}\right)J_{m+\frac{1}{2}}\left(\frac{\sqrt{\sigma}j_{{m},s_{0}+1}}{\mathbf{n}}\right).
\end{aligned}
\end{equation}
For a fixed $s_0\in \mathbb{N}$, we have for $m(s_0)$ large enough that
\begin{equation}
\begin{aligned}
J_m\left(\frac{\sqrt{\sigma}j_{{m},s_{0}}}{\mathbf{n}}\right)J_m\left(\frac{\sqrt{\sigma}j_{{m},s_{0}+1}}{\mathbf{n}}\right)<J^2_m(j^{\prime}_{m,1}),
\\
J_{m+\frac{1}{2}}\left(\frac{\sqrt{\sigma}j_{{m},s_{0}}}{\mathbf{n}}\right)J_{m+\frac{1}{2}}\left(\frac{\sqrt{\sigma}j_{{m+\frac{1}{2}},s_{0}+1}}{\mathbf{n}}\right)<J^2_{m+\frac{1}{2}}(j^{\prime}_{m+\frac{1}{2},1}).
\end{aligned}
\end{equation}
Note that the zeros of $J_{\nu}$ are interlaced with those of $J_{\nu+1}$ for $\nu\in\mathbb{R}_{+}$ (cf. \cite{LZ}). One clearly has
\begin{equation}
f^{(2)}_m\left(\frac{\sqrt{\sigma}j_{{m},s_{0}}}{\mathbf{n}}\right)f^{(2)}_m\left(\frac{\sqrt{\sigma}j_{{m},s_{0}+1}}{\mathbf{n}}\right)<0,\ \
f^{(3)}_m\left(\frac{\sqrt{\sigma}j_{{m+\frac{1}{2}},s_{0}}}{\mathbf{n}}\right)f^{(3)}_m\left(\frac{\sqrt{\sigma}j_{{m+\frac{1}{2}},s_{0}+1}}{\mathbf{n}}\right)<0,
\end{equation}
which readily yields the desired results in the statement of the lemma by Rolle's theorem.
\end{proof}
\begin{theorem}\label{thm:bisurloc1}
Assume that $\mathbf{n}^2>\sigma$ and $\tau\in (0, 1)$ is fixed. Let $(u_m, v_m)$ be the pair of eigenfunctions associated with $k_{l_m}$ in \eqref{eq:bilocalized1}. We have
\begin{equation}\label{eq:jj1}
\lim\limits_{m\rightarrow\infty}\frac{\|u_m\|_{L^2(\Omega_{\tau})}}{\|u_m\|_{L^2(\Omega)}}=0,\quad \lim\limits_{m\rightarrow\infty}\frac{\|v_m\|_{L^2(\Omega_{\tau})}}{\|v_m\|_{L^2(\Omega)}}=0.
\end{equation}
\end{theorem}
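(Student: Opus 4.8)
The plan is to mimic the structure of the proof of Theorem~\ref{thm:surloc}, but now exploit the fact that both $u_m$ and $v_m$ are explicit (scaled) Bessel modes, so that the boundary-localization estimate can be applied to \emph{each} part separately. First I would fix $s_0\in\mathbb{N}$ and use the preceding Lemma to produce, for each $m>m_0(\mathbf{n},\sigma)$, a transmission eigenvalue $k_{l_m}$ lying in the interval $\left(\frac{\sqrt{\sigma}\,j_{m,s_0}}{\mathbf{n}},\frac{\sqrt{\sigma}\,j_{m,s_0+1}}{\mathbf{n}}\right)$ in the two-dimensional case (and the analogous half-integer-order interval in three dimensions). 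The key point is that this interval is pinned between consecutive zeros $j_{m,s_0}$ of $J_m$, rescaled by the fixed factor $\sqrt{\sigma}/\mathbf{n}<1$; combined with the asymptotic bound \eqref{eq:jms bdd} on $j_{m,s}$, this gives two-sided control $c_1\leq k_{l_m}/m\leq c_2$ on the growth rate of the eigenvalues, with constants depending only on $\sigma,\mathbf{n}$ and $s_0$.

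Next I would treat the $v_m$-part exactly as in Theorem~\ref{thm:surloc}: since $v_m=J_m(k_{l_m}r)e^{\mathrm{i}m\theta}$ (respectively a spherical Bessel mode), the ratio $\|v_m\|_{L^2(\Omega_\tau)}^2/\|v_m\|_{L^2(\Omega)}^2$ reduces to a ratio of radial integrals $\int_0^\tau r J_m^2(k_{l_m}r)\,\mathrm{d}r$ over $\int_0^1 r J_m^2(k_{l_m}r)\,\mathrm{d}r$, and the same chain of Bessel-function estimates used in the proof of Theorem\,2.6 in \cite{DJLZ21} yields a bound of the form $C\,m^4\tau^2(1-\delta)^{2m}\to 0$. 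The genuinely new content is the $u_m$-part. Here $u_m=J_m\!\left(\frac{k_{l_m}\mathbf{n}}{\sqrt{\sigma}}r\right)e^{\mathrm{i}m\theta}$, so the effective frequency is $\widetilde{k}_m:=k_{l_m}\mathbf{n}/\sqrt{\sigma}$. The plan is to observe that $\widetilde{k}_m$ satisfies the \emph{same} type of two-sided growth estimate as $k_{l_m}$: indeed $\widetilde k_m\in\left(j_{m,s_0},\,j_{m,s_0+1}\right)$ by construction of the interval, so $\widetilde k_m/m$ is likewise bounded above and below by constants close to $1$. One can therefore run the identical radial-integral argument with $k_{l_m}$ replaced by $\widetilde k_m$, obtaining $\|u_m\|_{L^2(\Omega_\tau)}^2/\|u_m\|_{L^2(\Omega)}^2\leq C'\,m^4\tau^2(1-\delta')^{2m}$, which also tends to zero.

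The main obstacle is verifying that the decay factor for the $u_m$-part is genuinely exponential, i.e.\ that the constant $\delta'=\delta'(\tau,\mathbf{n},\sigma)$ is strictly positive. The step $\big(J_m(\widetilde k_m\tau)/J_m(\widetilde k_m)\big)^2\leq (1-\delta')^{2m}$ relies on the ratio $\widetilde k_m/m$ staying \emph{strictly below} the transitional value at which $J_m$ ceases to be exponentially small at the smaller argument; since $\widetilde k_m<j_{m,s_0+1}$ and $j_{m,s_0+1}/m\to 1$ as $m\to\infty$ for fixed $s_0$, the argument $\widetilde k_m\tau$ stays in the region $t<m$ where the uniform Debye-type asymptotics of $J_m$ give exponential decay in $m$, so $\delta'>0$ can be extracted from those asymptotics just as $\delta$ is in \cite{DJLZ21}. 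The same reasoning applies verbatim to the three-dimensional half-integer-order Bessel functions. Once both estimates are in place, letting $m\to\infty$ yields \eqref{eq:jj1} and completes the proof.
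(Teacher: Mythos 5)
Your proposal follows essentially the same route as the paper's proof: the $v_m$-part is dispatched by the argument of Theorem~\ref{thm:surloc}, and the $u_m$-part is handled by observing that the effective frequency $\widetilde{k}_m=k_{l_m}\mathbf{n}/\sqrt{\sigma}$ lies in $(j_{m,s_0},j_{m,s_0+1})$, verifying that $\widetilde{k}_m\tau<j'_{m,1}$ for large $m$ (the paper does this by showing $\sqrt{\sigma}\,j'_{m,1}/(\mathbf{n}k_{l_m})\to 1>\tau$), and then rerunning the chain of estimates from Theorem~2.6 of \cite{DJLZ21} with $k_{l_m}$ replaced by $\widetilde{k}_m$. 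The approach and the key verification that the exponential-decay step remains valid for the rescaled argument both match the paper.
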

\begin{proof}
For $v_m$, it is similar to the nonconstant case in the previous section. Hence, we only prove that $u_m$ is boundary-localized in the two dimensions and the three-dimensional case can be proved by following similar arguments.

Since $u_m(x)=J_{m}(\frac{k\mathbf{n}r}{\sqrt{\sigma}})e^{\mathrm{i}m\theta}$, one has
\begin{equation}
\frac{\mathbf{n}}{\sqrt{\sigma}}k_{l_m}>\frac{\mathbf{n}}{\sqrt{\sigma}}\frac{\sqrt{\sigma}j_{m,s_0}}{\mathbf{n}}>m(1+2(\frac{s_0+1}{m})^{\frac{2}{3}}).
\end{equation}
Next, we show that for $m$ sufficiently large, it holds that $\frac{\mathbf{n}}{\sqrt{\sigma}}k_{l_m}\tau<j^{\prime}_{m,1}$. In fact, one can first deduce that
\begin{equation}\label{inequality3}
\frac{m\sqrt{\sigma}}{\mathbf{n}k_{l_{m}}}<\frac{j^{\prime}_{m,1}\sqrt{\sigma}}{\mathbf{n}k_{l_{m}}}<\frac{m(1+3(\frac{1}{m})^{\frac{2}{3}}+2(\frac{1}{m})^{\frac{4}{3}})}{m(1+2(\frac{s_0+1}{m})^{\frac{2}{3}})},
\end{equation}
which gives that
\begin{equation}
\lim\limits_{m\rightarrow\infty}\frac{\sqrt{\sigma}j^{\prime}_{m,1}}{\mathbf{n}k_{l_{m}}}=1.
\end{equation}
Hence, for $\varepsilon=\frac{1}{2}(1-\tau)$, there exists a $m_0\in\mathbb{N}$ such that when $m>m_0$, we have
\begin{equation}
\frac{\sqrt{\sigma}j^{\prime}_{m,1}}{\mathbf{n}k_{l_{m}}}>1-\varepsilon>\frac{1}{2}(1+\tau)>\tau.
\end{equation}
That is, $\frac{\mathbf{n}}{\sqrt{\sigma}}k_{l_m}\tau<j^{\prime}_{m,1}$. Next similar to the arguments in the proof of Theorem\,2.6 in \cite{DJLZ21}, we have
\begin{equation}
\begin{array}{ll}
\frac{\left\|u_{m}\right\|_{L^{2}(\Omega_{\tau})}^{2}}{\left\|u_{m}\right\|_{L^{2}(\Omega)}^{2}}&\leq \frac{\int_{0}^{\tau}rJ^2_{m}(\frac{\mathbf{n}}{\sqrt{\sigma}}k_{l_{m}}r)\mathrm{d}r}
{\int_{0}^{1}rJ^2_{m}(\frac{\mathbf{n}}{\sqrt{\sigma}}k_{l_{m}}r)\mathrm{d}r}\\
&\leq \displaystyle \frac{\tau^2J^2_{m}(\frac{\mathbf{n}}{\sqrt{\sigma}}k_{l_{m}}\tau)}{\frac{\frac{1}{2}J^3_{m}(\frac{\mathbf{n}}{\sqrt{\sigma}}k_{l_{m}})}{J_{m}(\frac{\mathbf{n}}{\sqrt{\sigma}}k_{l_{m}})+2k_{l_{m}}J'_{m}(\frac{\mathbf{n}}{\sqrt{\sigma}}k_{l_{m}})}}\\
&\leq
2\tau^2\left(\frac{J_{m}(\frac{\mathbf{n}}{\sqrt{\sigma}}k_{l_{m}}\tau)}{J_{m}(\frac{\mathbf{n}}{\sqrt{\sigma}}k_{l_{m}})}\right)^2\left(1+2k_{l_{m}}\frac{J'_{m}(\frac{\mathbf{n}}{\sqrt{\sigma}}k_{l_{m}})}{J_{m}(\frac{\mathbf{n}}{\sqrt{\sigma}}k_{l_{m}})}\right)\\
&\leq
36(\sqrt{M_1}-1)m^4\tau^2\left(\frac{J_{m}(\frac{\mathbf{n}}{\sqrt{\sigma}}k_{l_{m}}\tau)}{J_{m}(\frac{\mathbf{n}}{\sqrt{\sigma}}k_{l_{m}})}\right)^2\\
&\leq144\frac{\sqrt{M_1}}{(\sqrt{M_1}-1)^2}m^4\tau^2(1-\delta(\tau,M_1,\sigma))^{2m}.
\end{array}
\end{equation}
which readily proves the first limit in \eqref{eq:jj1}. 

The proof is complete.

\end{proof}

\section*{Acknowledgments}
The research of H. Liu was supported by the Hong Kong RGC General Research Funds (projects 11300821, 12301420 and 12302919) and NSFC/RGC Joint Research Fund (project N\_CityU101/21). The research of J. Zhang was supported by the Natural Science Foundation of Jiangsu Province (grant no. BK20210540), the Natural Science Foundation of the Jiangsu Higher Education Institutions of China (grant no. 21KJB110015). The research of Y. Jiang and K. Zhang was supported in part by China Natural National Science Foundation (grant no. 11871245 and 11971198),  the National Key R\&D Program of China (grant no. 2020YFA0713601), and by the Key Laboratory of Symbolic Computation and Knowledge Engineering of Ministry of Education, Jilin University, China.

\end{document}